\numberwithin{equation}{section}
\def\R{\mathbb{R}}
\renewcommand\d{\partial}
\def\e{\varepsilon}
\def\e{\varepsilon}
\newcommand\br{\begin{rem}}
\newcommand\er{\end{rem}}
\newcommand\bp{\begin{pmatrix}}
\newcommand\ep{\end{pmatrix}}
\newcommand\be{\begin{equation}}
\newcommand\ee{\end{equation}}
\newcommand\ba{\begin{equation}\begin{aligned}}
\newcommand\ea{\end{aligned}\end{equation}}
\newtheorem{theorem}{Theorem}[section]
\newtheorem{proposition}[theorem]{Proposition}
\newtheorem{lemma}[theorem]{Lemma}
\newtheorem{corollary}[theorem]{Corollary}
\newtheorem{remark}[theorem]{Remark}
\title[Traveling waves for quasilinear Fisher-Burgers equations]{Heteroclinic traveling fronts for a generalized Fisher-Burgers equation
with saturating diffusion} 
\begin{document}

\maketitle

\begin{center}
MAURIZIO GARRIONE\footnote{Universit\`a di Milano Bicocca, Dipartimento di Matematica e Applicazioni, Milano (Italy). E-mail address:
\texttt{maurizio.garrione@unimib.it}}, MARTA STRANI\footnote{Universit\'e Paris Diderot, Institut de Math\'emathiques de Jussieu, Paris
(France), E-mail addresses: \texttt{martastrani@gmail.com; marta.strani@imj-prg.fr}}\\
\end{center}

\vskip1cm

\begin{abstract}
We study the existence of monotone heteroclinic traveling waves for a general Fisher-Burgers equation with nonlinear and possibly
density-dependent diffusion. Such a model arises, for instance, in physical phenomena where a saturation effect appears for
large values of the gradient. We give an estimate for the critical speed (namely, the first speed for which a monotone heteroclinic traveling wave
exists)
for some different shapes of the reaction term, and we analyze its dependence on a small real parameter when this brakes the diffusion, complementing our study with some numerical simulations.
\end{abstract}

\begin{quote}\footnotesize\baselineskip 14pt 
{\bf Key words:} saturating diffusion, Fisher-Burgers equation, density-dependent diffusion, traveling waves, admissible speeds. 
 \vskip.15cm
\end{quote}

\begin{quote}\footnotesize\baselineskip 14pt 
{\bf AMS 2010 subject classification:} 34C37, 35K55, 35K57. 
 \vskip.15cm
\end{quote}


\pagestyle{myheadings}
\thispagestyle{plain}

\section{Introduction}\label{intro} 

This paper concerns one-dimensional quasilinear reaction-diffusion PDEs. We study
the existence of monotone heteroclinic connections of traveling-wave type for
reaction-diffusion equations with saturating and density-dependent diffusion, in presence of a convective term;
precisely, we deal with the equation
\begin{equation}\label{QFB}
\d_t u=\d_x(P[\d_x(D(u))]) - \d_x h(u) + f(u), \quad (u=u(t, x)),
\end{equation}
under a suitable set of assumptions 
(see Section \ref{sez2}). 
Not to overload the notation and the
readability of the paper, our main results will focus on the special case when $P(s)$ is of mean curvature type, namely 
\begin{equation}\label{curvat}
P(s)=\frac{s}{\sqrt{1+s^2}} 
\end{equation}
(see, for instance, \cite{KurRos}). However, the presented results
may hold true for a larger
class of operators, as it will be pointed out later on in the paper.
\smallbreak
Reaction-convection-diffusion type models like \eqref{QFB}, with a special focus on traveling waves, are very popular and can be used to describe
different mathematical and physical phenomena: the main areas of application, for which we quote a few references, are, for instance, neurophysics and biophysics \cite{Kee80, OstYan75}, chemical physics \cite{Fif76b, OrtLos75}, population genetics
\cite{AroWei75,LiLiWang}, tumor growth \cite{PerTanVau14} and mathematical ecology \cite{MimMur79}. As describing quite complicated
realistic models,
most of the above applications concern systems of equations in higher dimensional spaces.
\smallbreak
Inside this vast class, the Fisher-Kolmogorov-Petrovsky-Piskounov (FKPP) equation
\begin{equation}\label{F}
\d_t u =\d_x^2 u + u(1-u),
\end{equation}
introduced by Fisher \cite{Fis37} to model the
spread of an advantageous gene inside a $1$-dimensional
population, and the viscous Burgers equation \cite{Burgers}
\begin{equation}\label{VB}
\d_t u=\d_x^2 u - u \d_x u,
\end{equation}
finding application, for instance, in problems related to gas dynamics and traffic
flows, are probably among the most popular.
In the recent years, an increasing interest has been devoted, as well, to a ``combination'' of \eqref{F} and \eqref{VB}, called
\emph{Fisher-Burgers} (or \emph{Burgers-Fisher}) equation, which is obtained by setting $P(s)=s$, $D(u) \equiv u$,
$f(u)=u(1-u)$ and
$h(u)=u^2/2$ in equation \eqref{QFB}; this is perhaps the easiest example of PDE taking into account both the
reaction-diffusion and the convection effects. Such an equation has been widely studied 
by means of both theoretical and numerical methods (see, e.g., \cite{Mac13,ZLZ13}), and represents the starting point of our investigation. 
In order to better set up our scheme and
introduce our results for generalized Fisher-Burgers equations displaying
saturating effects, we thus now start briefly reviewing
the results which are available, in our perspective, for equations \eqref{F} and \eqref{VB}. 
\smallbreak
After its introduction \cite{Fis37} (as pointed out, for instance, in \cite{Fen07}, actually the investigation and analysis of traveling waves in chemical reactions is probably first due to Luther \cite{Lut06}), the Fisher equation \eqref{F} was widely studied, from a mathematical point of view, starting with the
paper \cite{KolPetPis37}. As mentioned above, it was meant to describe the evolution of the relative concentration of a gene inside a
population; under the natural assumption $f(0)=0=f(1)$ (meaning that no reaction can be present if the gene is completely spread or not
spread at all), it was already
observed in these first articles the natural appearance of heteroclinic solutions connecting the equilibria $0$ and $1$ - we will call them
\emph{$\{0, 1\}$--connections} - of traveling wave type, embodying the
transition from the situation where the gene is not present to the one where every individual possesses it.
By standard phase-plane arguments, it is immediate to see (see, for instance, \cite{FifMcL}) that such solutions are
necessarily monotone; it was then proved in \cite{FifMcL} that they also have some stability properties for a certain
class of initial data. 
\newline
For a better comprehension of the model \eqref{F}, it is thus natural to search first for solutions having the form $u(t,
x)=v(x+ct)$, with $v$ an increasing
function such that $v(-\infty)=0$, $v(+\infty)=1$. The values of $c$ yielding such solutions are called
\emph{admissible speeds}, and it turns out
that the set of the admissible speeds for equation \eqref{F} is an unbounded interval $[c^*, +\infty)$; $c^*$ takes the
name of \emph{critical speed}. For example, in the Fisher case \eqref{F} it is
$c^*=2$. Later on, reaction terms other than the Fisher one were studied: the most popular are the so called
\emph{type B} and \emph{type C} (according to the terminology in \cite{BerNir}). The former has been considered in some combustion
problems, while the latter appears naturally in Biology (see \cite{AroWei}), in relation with the death rates of
populations presenting different genotypes with
respect to a particular gene, or in aggregative-diffusive models for population dynamics (see, e.g.,
\cite{MalMarMat07} for the case of a sign-changing diffusion). By some monotonicity arguments, for both of these situations (see
Section
\ref{sez4} below) it may be seen
that the admissible speed exists and is unique.  
\smallbreak
On the other hand, the viscous Burgers equation \eqref{VB} often appears as a simplification of more complicated models: a well known
example is the one given by the
Navier-Stokes equations, describing the dynamics of compressible or incompressible fluids.
If we consider \eqref{VB} without the second order viscous term and with general flux function describing the convection phenomenon, it is
well known that solutions with
discontinuities (called internal 
shocks) are allowed in the class of the so called  {\it entropy solutions}; in particular, in order to provide uniqueness, it is common to
assume the flux function to be convex (for more details, see \cite{Evans}).
On the other hand, when the viscosity is ``switched on",  all the discontinuities that appear
at the hyperbolic level turn out into smooth transition layers. In this setting, smooth traveling wave solutions to the Cauchy problem 
associated with \eqref{VB} have been extensively studied over the past years; indeed, when looking for a solution of the form
$u(t, x)=v(x+ct)$, equation \eqref{VB} can be explicitly solved by integration, leading to
\begin{equation}\label{TVB}
v(x+ct)= \frac{u_- +u_+e^{\kappa(x+ct)}}{1+e^{\kappa(x+ct)}},
\end{equation}
where $u_\pm := -c\mp \sqrt{c^2+2A}$ ($A$ being an integration constant), and $\kappa:=( u_--u_+)/2$.  In particular, $u_->u_+$ by
definition,
so that $\kappa>0$ and \eqref{TVB} represents  a 
decreasing function connecting the values $u_-= \lim_{x\to -\infty} u(t, x)$ and $u_+ = \lim_{x \to +\infty} u(t, x)$. Finally, it
holds that 
\begin{equation}\label{condC}
c={-}\frac{u_++u_-}{2}
\end{equation}
meaning that, once the values $u_-$ and $u_+$ are chosen, there exists a unique $\{u_-, u_+\}$--connection of traveling wave type, having
speed $c$. 
\medbreak 
What we have described so far holds true when the diffusion is \emph{linear}. However, assuming that a diffusive
process in nature takes
place in a homogeneous way, disregarding local biological and physical conditions (both environmental - like light, humidity, temperature
- and intrinsic in the behavior of the described population - e.g., its density or concentration, its gradient, cf., for instance, \cite{Tur2015}), seems to be quite a
strong restriction. For instance, it may be more likely to imagine that individuals move from richly to scarcely populated areas, or, on the
contrary, that under some conditions they tend to aggregate (for instance under the risk of extinction), in opposition to a diffusion-type
phenomenon. Similarly, it may be expected that the diffusion relents once the gradient of the density becomes large, in view of
a kind of blockage effect at too high gradients. 
\newline
Of course, there are many ways of introducing such effects in a model, leading to different pictures for the considered problem; many authors worked in this direction, part of the associated bibliography being available inside the references we
mention at
the end of the paper. Our choice in the present article is to take into account a usual diffusion process (towards less
``populated'' areas) which depends on the population density and displays a saturation effect for large gradients;
explicitly, motivated, among the others, by the papers \cite{Had, MalMar02} and \cite{KurRos,
Ros}, we consider a diffusion depending
on $\d_x u$ through the mean-curvature type operator (as the one considered in \cite{GarSan}) and possibly density-dependent.
\newline
In any case, we warn the reader that modifying the assumptions in the
diffusion makes possible, in principle, the appearance
of other kinds of solutions which are not  regular any more, since the functional space to appropriately set
the problem into may
include discontinuous functions (for instance, in the case \eqref{curvat} one is led to work in the space of bounded variation functions).
However, we here disregard this issue, focusing since the beginning on regular traveling waves. 
\smallbreak
As for the admissible speeds, and referring to the previous discussion about the Fisher equation, introducing a
density-dependent and saturating diffusion may lead to some different outcomes with respect to the linear case. If, on one hand, the case of Fisher-type nonlinearities like $f(u)=u(1-u)$ essentially displays the same features, on the other hand it may be seen
that the situation changes when the reaction term is of type C: in the recent paper \cite{GarSan} this was highlighted
when dealing with the model 
\begin{equation*}
\d_t u= \d_x (P(\d_x u)) +f(u),
\end{equation*}
namely equation \eqref{QFB} with $D(u)=u$ and $h'(u) \equiv 0$. 
\newline
Let us also notice that, wishing to deal instead with a similar issue for the quasilinear counterpart of the Burgers equation
\begin{equation}\label{QB}
\d_t u= \d_x (P(\d_x u)) - \d_x h(u), 
\end{equation}
the situation is slightly
different from the one for \eqref{F}, since equation \eqref{QB} admits all the constants as equilibria. 
Fixed two equilibria $u_\pm$ to be connected, in order to find monotone $\{u_-, u_+\}$-connections of traveling wave type
$u(t, x)=v(x+ct)$ one thus has to solve the boundary value problem 
\begin{equation}\label{BIIord}
\left\{
\begin{array}{l}
(P(v'))' - (h'(v)+c)v' = 0 \\
v(-\infty)=u_-, \; v(+\infty)=u_+, 
\end{array}
\right.
\end{equation}
asking $v'(s) > 0$ for every $s \in \mathbb{R}$ (or $v'(s) < 0$ for every $s \in \mathbb{R}$). 
Integrating the differential equation on the real line and noticing that $v'(\pm \infty)=0$ in view of
the monotonicity, it is immediate to see that it has to hold
$$
h(u_-)-h(u_+)=c(u_+-u_-),
$$
so that there exists at most a unique $c \in \mathbb{R}$ giving a monotone $\{u_-, u_+\}$--connection; we stress
that, in
the Burgers case
$h(u)=u^2/2$,
the previous condition is exactly \eqref{condC} (and in this case $v'(s) < 0$ for every $s \in \mathbb{R}$). 
Hence, here the picture is a bit different - actually, as we will see in Section 3,
the situation is much more restrictive - and we can explicitly solve the problem, due to the particular form
of the equation.
\medbreak
It is thus natural to wonder which effect on the admissible speeds prevails when combining a nonlinear advection
$h(u)$ and a nonlinear forcing term $f(u)$ in equation \eqref{QFB}, together with a (realistic) saturating and density-dependent diffusion.
This will be our main concern; precisely, we study the problem 
\begin{equation}\label{ilproblemaintro}
\left\{
\begin{array}{l}
(P[(D(v))'])' - (h'(v)+c)v' + f(v) = 0 \\
v(-\infty)=u_-, \; v(+\infty)=u_+, \quad v'(s) > 0,
\end{array}
\right.
\end{equation}
derived from the search for monotone heteroclinic traveling waves for \eqref{QFB}, and we determine the set of the admissible speeds
for different choices of $f(u)$, providing some numerical evidence, as well. Complementarily, we also start investigating the admissible
speeds in the small viscosity limit, analyzing the dependence of the critical speed on a small viscosity parameter braking the diffusion
(see Section \ref{sez6}).

In this respect, let us mention that the case without reaction was taken into account, e.g., in \cite{KurRos2}; on the other hand, the
linear
diffusion case with both a reaction and a convective term was analyzed, for instance, in \cite{MalMar02}, where it was
shown that one could in general provide only an estimate of $c^*$. 
In this framework, the nonlinear (but not saturating) 
diffusion case has been studied in \cite{MalMarMat04}, as well as in 
\cite{DePSan00, FerDePRG06} for the case of porous media-type equations.
Finally, the specific case of the linear Fisher-Burgers equation, with particular emphasis on traveling waves, has attracted the interest of several authors in these last years
(we refer, as examples, to the previously mentioned papers \cite{Mac13,ZLZ13}). 

For completeness, let us also cite the paper \cite{SanMai94}, the monography \cite{GilKer}, as well as the references \cite{Aro80,
Mur} for reviews and detailed explanation about density-dependent processes, and, as an example, the papers \cite{BonObeOma, ObeOma} for the
study of
different
boundary value problems for the mean-curvature operator. 
On the other hand,  traveling waves for the FKPP equation have been largely studied, respectively, in association
with
porous media-type diffusions or when the diffusion process takes place on a Riemannian manifold. Far from being complete in the
references, we quote,
respectively, \cite{CalCamCasSanSol15, DePSan98, StaVaz15} and \cite{MatTesPun15}, referring the reader to the references in these papers,
as
well. 
\medbreak
We close this Introduction with a plan of the paper. 
\newline
In Section \ref{sez2} we present the general procedure to reduce equation \eqref{QFB} to the
first order; we here adapt to the presence of the nonlinear term $D(u)$ the technique already exploited, for instance, in the
papers \cite{GarSan, MalMar02}. Essentially, it consists in a suitable change of variables heavily relying on the fact that we search for
\emph{monotone} solutions. For our purposes, it is here crucial that the shape of the nonlinearity $f(u)$ is invariant under
``rescalement'' by $D(u)$. Even more, this may happen in presence of a wider class of operators (see assumption $\mathbf{(P)}$ in Section \ref{sez2}). 
\newline
Section \ref{sez3} is the core of the paper and it is entirely devoted to the study of the admissible speeds for equation \eqref{QFB}, assuming the reaction term to be positive on $(0, 1)$. In particular, the main
result of the section is the following.
\begin{theorem}\label{ilprincipale}
Let $d(u)=D'(u)$ be a continuous and positive function and let $P(s)$ be as in \eqref{curvat}. Moreover, let $h: \mathbb{R} \to
\mathbb{R}$ be a $C^1$-function such that $h(0)=0$, and $f: \mathbb{R} \to
\mathbb{R}$ be a continuous function such that $f'(0)$ exists, 
$f(u) > 0$ for every $u \in \,(0, 1)\,$ and $f(u) \leq ku$, $f(u) \leq l(1-u)$ for every
$u \in [0, 1]$, for suitable constants $k, l > 0$ (notice that this implies $f(0)=0=f(1)$). Then, there
exists $c^* \in \mathbb{R}$, with
$$
2\sqrt{d(0) f'(0)} - h'(0) \leq c^* \leq 2 \sqrt{\sup_{u \in [0, 1]} \frac{d(u)f(u)}{u}} - \min_{u \in [0, 1]} h'(u),
$$
such that \eqref{ilproblemaintro} has a solution if and 
only if $c \geq c^*$.
\end{theorem}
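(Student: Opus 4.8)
The plan is to carry out the first-order reduction announced for Section~\ref{sez2} and then to run a shooting/comparison analysis in the resulting phase plane, the two sides of the estimate for $c^*$ coming from a local study at the left endpoint and from a global comparison with an explicitly solvable linear problem. Since we look for monotone $\{0,1\}$-connections, I would use $v'>0$ to regard the flux $z:=P\big((D(v))'\big)$ as a function of the profile value $v$. Writing the equation in \eqref{ilproblemaintro} as $\big(P((D(v))')\big)'=(h'(v)+c)v'-f(v)$, noting by the chain rule that the left-hand side equals $z'(v)\,v'$ (here $z'$ denotes the derivative in $v$), and using $z=P(d(v)v')$ so that $v'=P^{-1}(z)/d(v)$ with $P^{-1}(z)=z/\sqrt{1-z^2}$, the problem turns into the singular first-order boundary value problem
\begin{equation}\label{sketch:firstorder}
z'(v)=c+h'(v)-\frac{g(v)}{P^{-1}(z)},\qquad v\in(0,1),\qquad z(0)=z(1)=0,\quad 0<z(v)<1,
\end{equation}
where $g(v):=d(v)f(v)$ is the \emph{effective reaction}. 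Because $d>0$, the function $g$ inherits the sign of $f$ (positive on $(0,1)$, vanishing at the endpoints): this is precisely the invariance ``under rescalement by $D$'' mentioned in the Introduction, and positive solutions $z$ of \eqref{sketch:firstorder} are in one-to-one correspondence with the fronts we seek.

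Next I would record the qualitative structure of \eqref{sketch:firstorder}. Its right-hand side is strictly increasing in $c$ and, for $0<z<1$, strictly increasing in $z$ (since $P^{-1}$ is increasing and $g>0$), so the trajectories emanating from the origin are monotonically ordered in $c$. Calling $c$ \emph{admissible} when the trajectory leaving $(0,0)$ stays in $\{0<z<1\}$ and reaches $(1,0)$, this monotonicity shows that the admissible set is closed upward (admissibility of $c$ forces admissibility of every larger speed); a continuity and compactness argument then upgrades this to a closed half-line $[c^*,\infty)$, which is the qualitative content of the theorem. The lower bound comes from linearizing \eqref{sketch:firstorder} at the left equilibrium: seeking $z\sim\lambda v$ as $v\to 0^+$ and using $P^{-1}(z)\sim z$ together with $g(v)/v\to d(0)f'(0)$ yields the indicial equation
\[
\lambda^2-\big(c+h'(0)\big)\lambda+d(0)f'(0)=0,
\]
and a trajectory entering $\{z>0\}$ monotonically requires a real positive root $\lambda$, hence $c+h'(0)\ge 2\sqrt{d(0)f'(0)}$ for every admissible $c$; this gives $c^*\ge 2\sqrt{d(0)f'(0)}-h'(0)$ (below that threshold no positive branch leaves the origin, which is a focus in the second-order picture).

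For the upper bound I would compare \eqref{sketch:firstorder} with a linear problem. Using $P^{-1}(z)\ge z$ on $(0,1)$, the one-sided bound $g(v)=d(v)f(v)\le M\,v$ with $M:=\sup_{u\in[0,1]}d(u)f(u)/u$, and $h'(v)\ge\min_{[0,1]}h'$, the right-hand side of \eqref{sketch:firstorder} dominates $b-Mv/z$, where $b:=c+\min_{[0,1]}h'$. When $b\ge 2\sqrt M$ the line $z=\nu v$, with $\nu=\tfrac12\big(b-\sqrt{b^2-4M}\big)>0$, solves $\zeta'=b-Mv/\zeta$ exactly and is therefore a sub-solution of \eqref{sketch:firstorder}; a comparison argument then keeps the trajectory issued from the origin above $\nu v>0$ on $(0,1)$, so that it stays positive and connects to $(1,0)$. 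Hence $2\sqrt M-\min h'$ is admissible and $c^*\le 2\sqrt M-\min_{[0,1]}h'$, as claimed.

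The delicate points I expect are the two singular ends of \eqref{sketch:firstorder}: justifying the selection of the trajectory leaving $(0,0)$ and its convergence to $(1,0)$, where the $0/0$ form of $g(v)/P^{-1}(z)$ obstructs a direct Cauchy--Lipschitz argument; establishing a genuine comparison principle for this singular equation so that the orderings used above are legitimate; and ruling out saturation, i.e.\ checking that $z$ stays strictly below $1$ so that the front obtained is smooth rather than sharp. I expect the behavioral hypotheses $f(u)\le ku$ and $f(u)\le l(1-u)$ to be exactly what furnishes the correct decay rates at the two endpoints and the required boundedness of $z$ away from $1$.
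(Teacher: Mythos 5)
Your reduction is the paper's argument in equivalent coordinates: your flux variable $z=P\big(d(v)v'\big)$ and the unknown $y$ of problem $(\star)$ are related by $z=\sqrt{y(2-y)}$, your independent variable $v$ is $D^{-1}(w)$, your indicial equation at the origin is exactly Lemma \ref{necessariaBF}, and your line $z=\nu v$ is precisely the paper's lower solution $1-\sqrt{1-\beta^2 D^{-1}(w)^2}$ with $\beta=\nu$ (same constants $M$ and $b$). The genuine gap is the last step of your sufficiency argument: keeping the trajectory issued from the origin above the sub-solution line does not make it ``connect to $(1,0)$'' --- it prevents exactly that. Since the line is a sub-solution on the \emph{whole} interval and the right-hand side is locally Lipschitz in the strip $0<z<1$, no solution can cross it from above to below; hence your forward trajectory, once above $\nu v$, either saturates at the singular level $z=1$ or reaches $v=1$ with $z(1)\ge \nu>0$. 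In either case it is not the sought connection. In fact the true connection lies strictly \emph{below} the line on all of $(0,1)$: it vanishes at $v=1$ where the line equals $\nu$, and the no-downward-crossing property propagates this ordering backward (for the Fisher nonlinearity with large $c$, the connection behaves like $z\approx d(v)f(v)/c$, below $\nu v$).

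What is missing is the backward half of the shooting, which is where the paper actually produces the solution. The paper studies the solution $y^-_{c,f}$ of \eqref{cauchyback}, shot backward from $(D_1,0)$; Lemma \ref{valoriback} shows, using $c+\min_{[0,1]}h'>0$ and $f(u)\le l(1-u)$, that it never reaches the singular value $1$ and stays positive in the interior --- this is the saturation control you defer to a closing remark, and it is a pillar of the proof, not a technicality. The connection is then $y^-_{c,f}$ itself, once one shows it tends to $0$ at the left endpoint. Your line enters here, but with the opposite role to the one you give it: since $y^-_{c,f}$ vanishes at the right endpoint while the line does not, $y^-_{c,f}$ can never lie above the line (otherwise forward comparison would force $y^-_{c,f}(D_1)\ge \nu>0$), so $0<y^-_{c,f}<\nu v$ and it is squeezed to $0$ as $v\to 0^+$. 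Equivalently, in the paper's phrasing (details in \cite{GarSan}): if $y^-_{c,f}$ did not tend to $0$, it would have to intersect the forward solution trapped above the line, and uniqueness in the open strip would force the two to coincide, which is impossible. Your forward trajectory is thus only one side of a contradiction/squeezing argument; by itself, the implication ``stays above $\nu v$, hence stays positive and connects to $(1,0)$'' is false, and no purely forward construction of this kind can close the proof.
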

The proof is carried out by means of a shooting technique and lower-upper solutions, after having performed the change of
variables
described in Section \ref{sez2}. We underline that Theorem \ref{ilprincipale} gives the same bound for the critical speed as in the
linear case, and finds immediate application to the
Fisher-Burgers equation. Of course, such a bound may be negative, in principle, but for sure we have an unbounded set of positive admissible
speeds. Incidentally, we observe that the analysis of
Section \ref{sez3} can
be easily extended to the case of suitable general convective terms of the form $H(u,\d_x u)$ (see, for instance, \cite{MalMar02}) .
\newline
On the other hand, in Section \ref{sez4} we start taking into account reaction terms $f(u)$ which are not positive in a right
neighborhood of zero. In Propositions \ref{possibileB} and \ref{possibileC}, we first give some sufficient conditions for the
existence
of a \emph{positive} admissible speed. Due to the presence of the
convection, we observe as well the possible appearance of \emph{a unique negative}
admissible speed  (cf. \cite{DePSan00}), differently from \cite{GarSan}; at the same time, we are able to show examples where no admissible speeds exist. The 
numerical description of these two situations is the concern of Section \ref{sez5}.
\newline
Finally, Section \ref{sez6} is devoted to the study of equation \eqref{QFB} in the small viscosity limit, meaning that we consider our 
problem with a small parameter $\e$ in front of the diffusive term. Using the results obtained in the previous sections, we show that
the positive critical speeds are proportional to $\e$.
This result is meaningful in the spirit
of the possible appearance of a phenomenon know as \emph{metastable dynamics}, as explained in details at the end of the section. In particular, we expect
the speed rate of convergence of the time dependent solutions towards their asymptotic limits to be influenced by the ``size" of the
viscosity, as already proved for the linear-diffusion case (see, for instance, the pioneering article \cite{KK86}). These features are currently the object
of our study, and may represent a possibly interesting follow-up of the present work.

\section{The change of variables}\label{sez2}

In this section, we introduce the main assumptions on the functions appearing in \eqref{QFB} and we describe the change of
variables needed in order to reduce such an equation to a first order ODE, when looking for monotone traveling waves. We will 
underline the differences
with respect to the case when $D(u)=u$, referring the reader, e.g., to the presentation given in \cite{GarSan} for the
general procedure.  
\smallbreak
We focus on the partial differential equation
\begin{equation}\label{PDEperturb}
\d_t u=\partial_x (P[\partial_x(D(u))]) - \partial_x h(u) + f(u),
\end{equation}
and we assume the following hypotheses: 
\begin{itemize}
 \item[$\mathbf{(P)}$] $P: \mathbb{R} \to \mathbb{R}$ is a $C^1$-function such that $P(0)=0$, $P'(0)=1$, $P'(s) > 0$ for every $s \in
\mathbb{R}$, and $\lim_{s \to \pm \infty} P(s) = L^\pm$, for suitable constants $L^\pm \in \mathbb{R}$; moreover,
\begin{equation}\label{singolarità}
\int_{-\infty}^{+\infty} sP'(s) \, ds < +\infty;
\end{equation}
 \item[$\mathbf{(D)}$] $D: [0, 1] \to \mathbb{R}$ is a $C^1$-function such that $D(0)=0$ and there exist
$d_0, d_1 > 0$ with
$$
0 < d_0 \leq D'(u)=d(u) < d_1, \quad \textrm{ for every } u \in [0, 1];
$$ 
 \item[$\mathbf{(H)}$] $h: [0, 1] \to \mathbb{R}$ is a $C^1$-function such that  $h(0)=0$; 
 \item[$\mathbf{(F)}$] $f: [0, 1] \to \mathbb{R}$ is a $C^1$-function such that $f(0)=0=f(1)$ and there exists $u_0 \in [0, 1)\,$ with 
$$
f(u_0)=0 \textnormal{ and } f(u) > 0  
\textnormal{ in } \,(u_0, 1)\,.
$$
\end{itemize}
A couple of comments are in order. First, hypothesis $\textbf{(P)}$ expresses the concept of saturating diffusion; the monotonicity of
$P(s)$ ensures the possibility of performing the change of variables below, while $P'(0)=1$ will be useful for the estimate
\eqref{stimalefinaleC} below, as we explain in Remark \ref{ilcurvatura}. Finally, assumption \eqref{singolarità} ensures that $R(y)$ defined in \eqref{lefunzioni} below presents a singularity, as is the case for the curvature operator. 
Concerning the last assumption, we have required for simplicity that $f \in C^1$, but many of the results we state (as we
have already seen in the statement of Theorem \ref{ilprincipale}) can be given under weaker assumptions, with minor changes. We also notice
that $\mathbf{(F)}$ states that $0$ and $1$ are equilibria (in general, the only ones) for \eqref{PDEperturb}, so that it makes sense to
search for $\{0, 1\}$--connections; in order for such connections to be \emph{increasing},
the positive sign of $f$ in a neighborhood of $1$ is necessary, as an elementary argument in the phase-plane shows. Moreover, depending on
the shape of the reaction term in a neighborhood of $0$,
we can distinguish between three different cases which
have by now become classical in the literature: 
\begin{itemize}
 \item[A)] $u_0=0$. In this case, $f(u) > 0$ for every $u \in \,(0, 1)\,$; a prototype model  is the so-called
\emph{Fisher-type} nonlinearity $f(u)=u(1-u)$. In this case, we say that $f$ \emph{is of type A} and we write $f \in \mathcal{A}$. 
 \item[B)] $u_0 > 0$ and $f(u) \equiv 0$ on $[0, u_0]$. In this case, we say that $f$ \emph{is of type B} and we write $f \in
\mathcal{B}$.
 \item[C)] $u_0 > 0$ and $f(u) < 0$ for every $u \in \,(0, u_0)\,$; a prototype model is given by the cubic nonlinearity
$f(u)=u(u-u_0)(1-u)$. In this case, we say that $f$ \emph{is of type C} and we write $f \in \mathcal{C}$. 
\end{itemize}
When looking for monotone heteroclinic traveling waves $u(t, x)=v(x+ct)$ solving \eqref{PDEperturb}, we are thus led to study the boundary
value problem
\begin{equation}\label{IIord}
\left\{
\begin{array}{l}
(P[(D(v))'])' - (c+h'(v)) v' + f(v) = 0 \\
v(-\infty)=0, \; v(+\infty)=1, \; v' > 0. 
\end{array}
\right.
\end{equation}
We observe that $D: [0, 1] \to D([0, 1])=:[0, D_1]$ is invertible (since $d(u) >
0$ implies that $D(u)$ is strictly increasing); thus, setting $w(t)=D(v(t))$, \eqref{IIord} can be rewritten as 
$$
\left\{
\begin{array}{l}
\displaystyle (P(w'))' - (c+h'(D^{-1}(w))) \frac{w'}{d(D^{-1}(w))} + f(D^{-1}(w)) = 0 \\
\\
w(-\infty)=0, \; w(+\infty)=D_1, \; w' > 0. 
\end{array}
\right.
$$
Since $D(u)$ and $u(t)$ are strictly monotone, the map $t \mapsto w(t)$ is invertible, so that we can treat
$w$ as the new independent variable and write $t=t(w)$ as a function of $w$. Setting $\phi(w)=w'(t(w))$, we can proceed
similarly
as in \cite{GarSan}, obtaining  
$$
\frac{d}{dw} Q(\phi(w)) - (c+\hat{h'}(w)) \frac{\phi(w)}{\hat{d}(w)} + \hat{f}(w) = 0,
$$
where $\hat{f}(w)=f(D^{-1}(w))$, $\hat{h'}(w)=h'(D^{-1}(w))$, $\hat{d}(w)=d(D^{-1}(w))$, and $Q(s)$ is the primitive of $sP'(s) $ satisfying $Q(0)=0$.
Writing $y(w)=Q(\phi(w))$
and denoting by $R(\cdot)$ the functional inverse of $Q$,  
\eqref{IIord} is thus equivalent to 
\begin{equation}\label{lagenerale}
\left\{
\begin{array}{l}
\displaystyle y'= \frac{(c+\hat{h'}(w))}{\hat{d}(w)} R(y) - \hat{f}(w), \\
\\
y(0)=0, \; y(D_1)=0, \; 0 < y(w) < 1 \textrm{ for } w \in \,(0, D_1)\,.
\end{array}
\right.
\end{equation}
We observe that, in view of the invertibility of $D(u)$, the shape of $\hat{f}(w)$ is preserved, namely $\hat{f}(w) \in
\mathcal{A}$ (resp. $\mathcal{B}, \mathcal{C}$) if and only if $f(u) \in \mathcal{A}$ (resp. $\mathcal{B}, \mathcal{C}$).
\\
For the sake of simplicity, from now on we consider the particular case where  
$$ 
P(v)=\frac{v}{\sqrt{1+v^2}},
$$
so that
\begin{equation}\label{lefunzioni}
Q(s)=1-\frac{1}{\sqrt{1+s^2}}, \quad 
R(y)=\frac{\sqrt{y(2-y)}}{1-y},
\end{equation}
and \eqref{lagenerale} becomes  
$$
\leqno{(\star)} \qquad \qquad \left\{
\begin{array}{l}
\displaystyle y'= \frac{(c+\hat{h'}(w))}{\hat{d}(w)} \frac{\sqrt{y(2-y)}}{1-y} - \hat{f}(w), \\
\\
y(0)=0, \; y(D_1)=0, \; 0 < y(w) < 1 \textrm{ for } w \in \,(0, D_1)\,,
\end{array}
\right.
$$
where we explicitly note that
$$
y(w)=1-\frac{1}{\sqrt{1+[d(u(t(w)))u'(t(w))]^2}}.
$$
Henceforth, problem $(\star)$ will be our main object of interest.

\section{Admissible speeds for reaction terms of type A}\label{sez3}

In this section, we study the admissible speeds for the partial
differential equation
\begin{equation}\label{NLFB}
\d_t u=\d_x(P[\d_x (D(u))] - \d_x h(u) + f(u),
\end{equation}
with
$$
P(s)=\frac{s}{\sqrt{1+s^2}},
$$
assuming that the reaction term $f(u)$ satisfies hypothesis $\mathbf{(F)}$ with $u_0 \equiv 0$, namely $f \in \mathcal{A}$.
Moreover, we require conditions
$\mathbf{(D)}$ and $\mathbf{(H)}$ to be satisfied.  
\newline
In view of the previous section, we directly focus our attention on problem $(\star)$; however, as stated in Theorem
\ref{ilprincipale}, in order
for a solution of such a problem to give rise to a ``proper'' heteroclinic traveling wave (namely, a function defined on the whole $\mathbb{R}$ reaching both equilibria in infinite time) it is
necessary to assume in addition that 
$$
\textrm{there exist } k, l > 0 \textrm{ with } f(u) \leq ku, \, f(u) \leq l(1-u) \textrm{ for every } u \in [0, 1].
$$
Otherwise, as highlighted in
\cite{BonSan}, coming back to the original variables may give rise to a ``sharp'' traveling wave, i.e., a traveling
wave reaching the equilibria in a finite time and then extended to the whole $\mathbb{R}$ with constant
value. We refer to \cite[Remarks 2.1 and 2.2]{GarSan} for further comments.
With this in mind, from now on we will always study the solvability of the first order two-point problem $(\star)$. 

\subsection{Critical speed and existence of traveling waves}

We first give a lower bound on the critical speed $c^*$. 
\begin{lemma}\label{necessariaBF}
Let $f \in \mathcal{A}$ and set $k:=f'(0)$. Let $y(w)$ be a solution to $(\star)$, with $y(w) > 0$ in a right neighborhood
of $0$.
Then,
$$
c \geq 2 \sqrt{d(0) f'(0)}- h'(0).
$$   
\end{lemma}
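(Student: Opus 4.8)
The plan is to argue by contradiction, exploiting the fact that near $w=0$ the nonlinearity $R(y)$ in $(\star)$ is asymptotically linear in $\sqrt{y}$, so that the local behaviour of a positive solution is governed by a quadratic whose discriminant must be nonnegative. Concretely, I would suppose $c < 2\sqrt{d(0)f'(0)}-h'(0)$ and set $\alpha_0:=(c+h'(0))/d(0)$, so that the assumption reads $\alpha_0<2\sqrt{k/d(0)}$. If $\alpha_0<0$ this is immediate: near $0$ one has $\alpha(w)<0$, $R(y)>0$ and $\hat f(w)=f(D^{-1}(w))>0$ (as $f\in\mathcal A$), whence $y'<0$ on a right neighbourhood of $0$, contradicting $y(0)=0$ and $y>0$. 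Thus it remains to treat $0\le\alpha_0<2\sqrt{k/d(0)}$, i.e. $\alpha_0^2<4k/d(0)$, and to show this forces the solution out of the strip $\{0<y<1\}$.

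First I would pass to the variable $\psi:=\sqrt{2y}$, which is legitimate and of class $C^1$ on the right neighbourhood of $0$ where $y>0$. Writing $R(\psi^2/2)=\psi\,\Phi(\psi)$ with $\Phi(\psi)=\sqrt{1-\psi^2/4}\,/(1-\psi^2/2)$, so that $\Phi(0)=1$ and $\Phi$ is continuous near $0$, problem $(\star)$ becomes
$$
\psi' \;=\; \alpha(w)\,\Phi(\psi)\;-\;\frac{\hat f(w)}{\psi}, \qquad \alpha(w):=\frac{c+\hat{h'}(w)}{\hat d(w)}.
$$
The elementary asymptotics I would record here are $\hat f(w)/w\to f'(0)/d(0)=k/d(0)$, $\alpha(w)\to\alpha_0$ and $\Phi(\psi(w))\to1$ as $w\to0^+$ (using $\hat f(w)=f(D^{-1}(w))$, $D^{-1}(w)\sim w/d(0)$, $f(u)\sim ku$, and $\psi(0^+)=0$).

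Next I would introduce the ratio $\sigma(w):=\psi(w)/w=\sqrt{2y(w)}/w$ and compute
$$
w\,\sigma' \;=\; \alpha(w)\Phi(\psi)\;-\;\sigma\;-\;\frac{\hat f(w)}{\sigma\,w}.
$$
Fixing $\varepsilon>0$ small enough that the perturbed discriminant stays negative, the asymptotics above give $w_1>0$ such that, for $0<w\le w_1$, one has $\alpha(w)\Phi(\psi)\le\tilde\alpha$ and $\hat f(w)/w\ge\tilde k$ with $\tilde\alpha^2<4\tilde k$. Hence $w\sigma'\le\tilde\alpha-\sigma-\tilde k/\sigma=-(\sigma^2-\tilde\alpha\sigma+\tilde k)/\sigma\le-m/\sigma$, where $m:=\tilde k-\tilde\alpha^2/4>0$. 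Therefore $(\sigma^2)'\le-2m/w$, and integrating from $w$ to $w_1$ yields $\sigma^2(w)\ge\sigma^2(w_1)+2m\ln(w_1/w)\to+\infty$, i.e. $\sigma(w)\to+\infty$ as $w\to0^+$.

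Finally I would extract the contradiction from the $\psi$-equation itself: since $\hat f(w)/\psi=(\hat f(w)/w)/\sigma\to0$ (bounded numerator against $\sigma\to\infty$) and $\alpha(w)\Phi(\psi)\to\alpha_0$, one gets $\psi'(w)\to\alpha_0$; combined with $\psi(0^+)=0$ this forces $\sigma(w)=\psi(w)/w\to\alpha_0<\infty$, contradicting $\sigma\to+\infty$. Consequently $\alpha_0\ge0$ and $\alpha_0^2\ge4k/d(0)$, that is $c\ge2\sqrt{d(0)f'(0)}-h'(0)$. The heart of the argument, and its main obstacle, is turning the formal ``no admissible real quadratic exponent'' observation into a rigorous statement: reading the degenerate behaviour at $w=0$ through $\sigma=\sqrt{2y}/w$ and playing the forced blow-up $\sigma\to+\infty$ against the bound $\psi'\to\alpha_0$ is precisely what makes the estimate rigorous while sidestepping any non-Lipschitz comparison at the square-root singularity of $R$.
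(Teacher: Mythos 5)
Your proof is correct, and it implements the key idea by a genuinely different mechanism than the paper's. The paper (following \cite[Lemma 3.1]{GarSan}) works directly with $l=\limsup_{w\to0^+}\bigl(\sqrt{y(w)}\bigr)'$: rewriting the equation in $(\star)$ for $(\sqrt{y})'$ and passing to the upper limit as $w\to 0^+$, it concludes that $l$ solves the quadratic $2d(0)\,l^2-\sqrt{2}\,(c+h'(0))\,l+f'(0)=0$, so that reality of $l$ forces the discriminant to be nonnegative, while $l>0$ forces $c+h'(0)>0$; together these give the bound. That argument is short but leaves implicit some delicate points (that the upper limit of the right-hand side splits term by term, and that $w/\sqrt{y(w)}$ can be identified with $1/l$ along a sequence realizing the limsup, a l'H\^opital-type step). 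You prove the same discriminant inequality by contradiction, replacing the limsup algebra with one-sided differential inequalities: assuming $0\le\alpha_0$ and $\alpha_0^2<4k/d(0)$ (the case $\alpha_0<0$ being elementary from the sign of $y'$), the strict positivity of the quadratic $\sigma^2-\tilde\alpha\sigma+\tilde k$ yields $(\sigma^2)'\le-2m/w$, hence the logarithmic blow-up of $\sigma=\sqrt{2y}/w$ as $w\to0^+$, which is then incompatible with $\psi'\to\alpha_0$ and $\psi(0^+)=0$, since averaging $\psi'$ gives $\sigma\to\alpha_0<+\infty$. All the individual steps check out (the identity $R(\psi^2/2)=\psi\Phi(\psi)$, the asymptotics $\hat f(w)/w\to f'(0)/d(0)$ via $D^{-1}(w)\sim w/d(0)$, the integration of the differential inequality, and the final Ces\`aro-type limit). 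Your route is self-contained and fully rigorous, avoids any comparison or uniqueness issue at the non-Lipschitz point $y=0$, and even gives a quantitative rate ($\sigma^2\gtrsim\log(1/w)$); the price is length, and it loses the by-product of the paper's computation, namely the explicit admissible values of the slope $l$ of $\sqrt{y}$ at $0$. Both proofs ultimately rest on the same classical KPP observation that the linearized problem at the degenerate equilibrium admits a real exponent only when $(c+h'(0))^2\ge 4d(0)f'(0)$ with $c+h'(0)>0$.
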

\begin{proof}
The proof is similar to the one of \cite[Lemma 3.1]{GarSan}.
In particular, since $y(w)$ has to be positive in a right neighborhood of $0$, we have $l=\limsup_{w \to 0^+ } (\sqrt{y(w)})' \geq 0$.
Moreover, from the differential equation in $(\star)$ we deduce 
$$
(\sqrt{y(w)})'=\frac{1}{2}\left[\frac{(c+\hat{h'}(w))}{\hat{d}(w)} \frac{\sqrt{2-y(u)}}{1-y(u)} - \frac{\hat{f}(w)}{w}
\frac{w}{\sqrt{y(w)}}\right],
$$
and, taking the upper limit for $w \to 0^+$ on both sides, we have 
$$
l = \frac{1}{2}\left[\sqrt{2}\frac{(c+\hat{h'}(0))}{\hat{d}(0)} - \frac{\hat{f}'(0)}{l}\right].
$$
Noticing that $\hat{d}(0)=d(0)$, $\hat{h'}(0)=h'(0)$ and $\hat{f}'(0)=f'(0)/d(0)$, we have 
$$
l = \frac{\sqrt{2}(c+h'(0)) \pm \sqrt{2(c+h'(0))^2-8 d(0) f'(0)}}{4d(0)}, 
$$
whence, since $l > 0$, it has to be $c+h'(0) > 0$ and 
$$
c \geq 2 \sqrt{d(0) f'(0)}- h'(0).
$$
\end{proof}
We now state and prove our main result concerning the critical speed for equation \eqref{NLFB}; 
we point out that it is a
generalization of the result previously obtained in \cite[Theorem 1.4]{MalMar02}, concerning a convection-reaction-diffusion equation
with linear diffusion (i.e., $P(s)=s$).

\begin{theorem}\label{teorema}
Let $f \in \mathcal{A}$. 
Then, there exists $c^* \in \mathbb{R}$,
with
\begin{equation}\label{stimalefinaleC}
2\sqrt{d(0) f'(0)} - h'(0) \leq c^* \leq 2 \sqrt{\sup_{u \in [0, 1]} \frac{d(u)f(u)}{u}} - \min_{u \in [0, 1]} h'(u),
\end{equation}
such that problem \eqref{IIord} has a solution if and only if $c \geq c^*$.
\end{theorem}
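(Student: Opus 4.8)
The plan is to work throughout with the reduced first-order two-point problem $(\star)$, which is equivalent to \eqref{IIord} by the change of variables of Section \ref{sez2}, and to describe the set $S:=\{c\in\mathbb{R}:(\star)\ \text{is solvable}\}$ of admissible speeds. I would prove that $S$ is a closed half-line $[c^*,+\infty)$ by establishing four facts: monotone dependence on $c$, existence for $c$ large (with the explicit upper bound), the lower bound (already contained in Lemma \ref{necessariaBF}), and the attainment of the infimum. The geometric picture to keep in mind is that an admissible solution is a positive ``bump'' in the strip $(w,y)\in[0,D_1]\times(0,1)$ leaving the corner $(0,0)$ and landing at $(D_1,0)$, with two singular points to be handled: the non-Lipschitz point $y=0$ (where $R(y)\sim\sqrt{2y}$) and the point $y=1$, where $R$ blows up.

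First I would prove \emph{monotonicity in $c$}: if $c_0\in S$ and $c>c_0$, then $c\in S$. The key observation is that the right-hand side $g(w,y;c):=\frac{c+\hat{h'}(w)}{\hat d(w)}R(y)-\hat f(w)$ is strictly increasing in $c$ for $y>0$, since $R(y)>0$ and $\hat d>0$. Hence a solution $y_0$ at speed $c_0$ satisfies $y_0'=g(w,y_0;c_0)<g(w,y_0;c)$, i.e. it is a strict subsolution of the $c$-problem. Shooting from the origin, I would select, among the one-parameter family of trajectories leaving $(0,0)$ made available by the non-uniqueness at $y=0$, the orbit pinned from below by $y_0$ and from above by a suitable supersolution, and conclude existence by a comparison/sandwich argument that forces it to land exactly at $(D_1,0)$. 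The flexibility required for solvability on a whole half-line, rather than at isolated speeds, comes precisely from this non-uniqueness at the singular endpoints.

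Next I would establish the \emph{upper bound} and, simultaneously, the nonemptiness of $S$. Setting $M:=\sup_{u\in[0,1]}\frac{d(u)f(u)}{u}$ and $\bar c:=2\sqrt{M}-\min_{u\in[0,1]}h'(u)$, I would use the weighted majorization $d(u)f(u)\le Mu$ together with $h'\ge\min h'$ to compare $(\star)$ with a constant-coefficient problem of linear-diffusion Fisher type, whose critical speed is exactly $2\sqrt{M}-\min h'$. By the comparison principle and the monotone dependence of the critical speed on the effective reaction, the true problem is solvable whenever this dominating problem is, so $\bar c\in S$ and, with the previous step, $[\,\bar c,+\infty)\subseteq S$. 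Combining this with Lemma \ref{necessariaBF}, which gives $c\ge 2\sqrt{d(0)f'(0)}-h'(0)$ for every $c\in S$, sandwiches $c^*:=\inf S$ between the two bounds in \eqref{stimalefinaleC}.

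Finally I would show that the infimum is \emph{attained}, so that $S=[c^*,+\infty)$. Taking admissible speeds $c_n\downarrow c^*$ with solutions $y_n$, monotonicity forces the family $\{y_n\}$ to be ordered, hence pointwise convergent to some $y^*$, and standard estimates away from the singular set yield local convergence to a solution of $(\star)$ at speed $c^*$. The \textbf{main obstacle} lies exactly here, and in the shooting step at the singular endpoints: one must rule out degeneration of the limit (collapse to $y^*\equiv 0$, or $y^*$ reaching the value $1$ where $R$ is singular) and verify that $y^*$ genuinely attains $y^*(0)=y^*(D_1)=0$ while remaining in $(0,1)$ on the interior. Controlling this passage to the limit through the non-Lipschitz behaviour at $y=0$ and the blow-up of $R$ at $y=1$, using the quantitative local analysis behind Lemma \ref{necessariaBF} and the integrability condition \eqref{singolarità}, is the delicate technical heart of the argument.
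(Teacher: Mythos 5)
Your overall skeleton --- work in $(\star)$, get the lower bound from Lemma \ref{necessariaBF}, prove solvability for large $c$, then upward-closedness and attainment of the infimum --- is the standard architecture, and you are right that the paper's displayed proof leaves the last two steps to ``standard'' arguments cited from \cite{GarSan}. However, the two steps you actually spell out contain genuine gaps. The existence step for $c \geq 2\sqrt{M}-\min h'$, with $M=\sup_{u}d(u)f(u)/u$, cannot be obtained by citing ``the comparison principle and the monotone dependence of the critical speed on the effective reaction''. The dominating problem you describe does not exist: a reaction majorant $Mu$ does not vanish at $u=1$, so the ``constant-coefficient linear-diffusion Fisher problem'' with that reaction admits no $\{0,1\}$-connection at all (while a genuine Fisher reaction $Mu(1-u)$ need not dominate $d(u)f(u)$); and there is no off-the-shelf principle comparing critical speeds across \emph{different} diffusion operators. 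What makes any such transfer work is the pointwise inequality $R(y)=\sqrt{y(2-y)}/(1-y)\geq\sqrt{2y}$ between the reduced nonlinearities, which you never invoke, together with a squeeze carried through the degenerate corners where $R$ is non-Lipschitz. The paper does exactly this work in explicit form: it builds the lower solution $z(w)=1-\sqrt{1-\beta^{2}D^{-1}(w)^{2}}$ of \eqref{cauchyaux}, with $\beta$ a root of the KPP-type quadratic \eqref{stimabeta} (solvable precisely when $c$ satisfies the upper bound in \eqref{stimalefinaleC}), proves in Lemma \ref{valoriback} that the backward solution $y^{-}_{c,f}$ of \eqref{cauchyback} stays in $[0,1)$ and is positive in the interior, and concludes with a uniqueness/connectedness argument. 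Your plan has no substitute for Lemma \ref{valoriback}: a solution of a linear-diffusion problem is not constrained to stay below $1$, so comparison with it cannot prevent $y^{-}_{c,f}$ from reaching the singularity of $R$.

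Second, your comparison directions are inverted. Since the right-hand side of $(\star)$ increases with $c$, the connection $y_{0}$ at speed $c_{0}$ is a \emph{subsolution} at speed $c>c_{0}$; forward orbits emanating from $(0,0)$ at speed $c$ therefore tend to lie \emph{above} $y_{0}$, i.e.\ they are pushed toward the singular level $y=1$, so pinning an orbit ``from below by $y_{0}$'' selects the wrong family and gives no control where control is needed (and the ``suitable supersolution'' from above, which would have to land at $(D_1,0)$, is precisely what you do not have). The object that works is the \emph{backward} solution of \eqref{cauchyback}: since $y_{0}$ is a backward supersolution vanishing at $D_{1}$, and since $R$ is increasing (so a positive difference $y^{-}_{c,f}-y_{0}$ would have to grow up to $w=D_{1}$, where both vanish), one gets $0\leq y^{-}_{c,f}\leq y_{0}$, hence $y^{-}_{c,f}(0)=0$ and $y^{-}_{c,f}$ itself is the connection at speed $c$; note that it is monotonicity of $R$, not Lipschitz continuity, that rescues the comparison at the degenerate endpoints you correctly flag as the technical heart. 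Until this mechanism is in place, your monotonicity step, your transfer-of-solvability step, and your limiting argument for the attainment of $c^{*}=\inf S$ all remain unproved, whereas the paper's route (explicit barrier for \eqref{cauchyforward} plus Lemma \ref{valoriback}) bypasses them for the range of $c$ covered by the upper bound.
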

The main idea of the proof is to make the (unique) solution to  
\begin{equation}\label{cauchyback}
\left\{
\begin{array}{l}
\displaystyle y'= \frac{(c+\hat{h'}(w))}{\hat{d}(w)} \frac{\sqrt{y(2-y)}}{1-y}-\hat{f}(w)  \vspace{0.1 cm} \\
y(D_1)=0,
\end{array}
\right.
\end{equation}
denoted by $y_{c, f}^-(w)$, to intersect, for (possibly more than) a suitable value of $c$, with the corresponding solution to 
\begin{equation}\label{cauchyforward}
\left\{
\begin{array}{l}
\displaystyle y'= \frac{(c+\hat{h'}(w))}{\hat{d}(w)} \frac{\sqrt{y(2-y)}}{1-y}-\hat{f}(w)  \vspace{0.1 cm} \\
y(0)=0,
\end{array}
\right.
\end{equation}
which we denote by $y_{c, f}^+(w)$.
In order to prove Theorem \ref{teorema}, we need to state the following additional lemma concerning the solution to
\eqref{cauchyback}. 
\begin{lemma}\label{valoriback}
Let $f$ satisfy $(\mathbf{F})$. Then, for every 
$$
c \geq 2 \sqrt{\sup_{u \in [0, 1]} \frac{d(u)f(u)}{u}} - \min_{u \in [0, 1]} h'(u),
$$
it holds
\begin{equation}\label{sotto1}
y_{c, f}^-(w) < 1 \quad \textrm{ for every } w \in [0, D_1].
\end{equation}
Moreover,
\begin{equation}\label{positivo}
y_{c, f}^-(w) > 0 \quad \textrm{ for every }  u \in \,(D(u_0), D_1].
\end{equation}
\end{lemma}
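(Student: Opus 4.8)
The plan is to establish the two assertions separately, using in both cases only the sign structure of the field in \eqref{cauchyback} together with the two relevant features of $R(y)=\sqrt{y(2-y)}/(1-y)$: it vanishes at $y=0$ and blows up as $y\to1^-$. It is convenient to read the backward solution $y_{c,f}^-$, which starts from $y(D_1)=0$, in the forward variable $s=D_1-w$; in this variable the equation becomes $\tfrac{dy}{ds}=\hat{f}(w)-\tfrac{c+\hat{h'}(w)}{\hat{d}(w)}R(y)$, and $y_{c,f}^-$ is obtained by integrating forward in $s$ from $s=0$.

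For the upper bound \eqref{sotto1} I would build an explicit horizontal barrier strictly below $1$. The hypothesis on $c$ gives $c+\hat{h'}(w)\ge c+\min_{u}h'(u)\ge 2\sqrt{\sup_{u}\tfrac{d(u)f(u)}{u}}>0$ for every $w$, so the coefficient multiplying $R(y)$ is bounded below by a positive constant; since $\hat{d}$ and $\hat{f}$ are bounded, the number $\mu:=\sup_{w\in[0,D_1]}\tfrac{\hat{d}(w)\hat{f}(w)}{c+\hat{h'}(w)}$ is finite (and positive, the supremum being attained near $D_1$ where $\hat f>0$). As $R$ is an increasing bijection of $[0,1)$ onto $[0,+\infty)$, the level $\bar{y}:=R^{-1}(\mu)$ lies strictly in $[0,1)$. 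At any point where $y_{c,f}^-=\bar{y}$ one has $R(\bar y)=\mu\ge \hat{d}\hat{f}/(c+\hat{h'})$, hence $\tfrac{dy}{ds}=\hat{f}-\tfrac{c+\hat{h'}}{\hat{d}}R(\bar y)\le 0$; thus $\bar{y}$ is an upper barrier for the forward $s$-flow, and since $y(0)=0<\bar y$ we conclude $y_{c,f}^-\le\bar{y}<1$ on all of $[0,D_1]$. In particular $y_{c,f}^-$ stays uniformly away from the singularity of $R$, so the solution is globally defined on $[0,D_1]$ and never reaches $1$.

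For the positivity \eqref{positivo} I would argue from the sign of the field at the zeroes of $y$. On $(D(u_0),D_1)$ we have $D^{-1}(w)\in(u_0,1)$, whence $\hat{f}(w)=f(D^{-1}(w))>0$ by $(\mathbf{F})$. First, $y_{c,f}^-\ge0$ there: whenever $y=0$ with $w\in(D(u_0),D_1)$ one has $R(0)=0$, so $\tfrac{dy}{ds}=\hat{f}(w)>0$, which means the flow can never carry $y$ below $0$ on this interval, so starting from $y(D_1)=0$ the solution remains nonnegative. Second, $y_{c,f}^-$ has no zero in the open interval: if $y_{c,f}^-(w_1)=0$ for some $w_1\in(D(u_0),D_1)$, then $w_1$ would be an interior minimum of the nonnegative function $y_{c,f}^-$, forcing $(y_{c,f}^-)'(w_1)=0$, while the equation gives $(y_{c,f}^-)'(w_1)=-\hat{f}(w_1)<0$, a contradiction. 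Hence $y_{c,f}^->0$ throughout $(D(u_0),D_1)$ (the prescribed value at the endpoint $D_1$ being $0$), which is the content of \eqref{positivo}.

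I expect the main point — rather than a serious obstacle — to be the role of the hypothesis on $c$ in \eqref{sotto1}: it is exactly what keeps $c+\hat{h'}$ bounded below by a positive constant, hence $\mu$ finite and the barrier $\bar y$ \emph{strictly} below $1$, so that $y_{c,f}^-$ never meets the singularity of $R$. The only genuinely delicate feature is the mild non-Lipschitz behavior of the field at $y=0$ (the same $\sqrt{y}$-type singularity already encountered in Lemma \ref{necessariaBF}), which is why I phrase positivity through the interior-minimum argument above, relying only on $y_{c,f}^-\ge0$ and the sign of $\hat{f}$, rather than through any uniqueness or explicit germ analysis near $D_1$.
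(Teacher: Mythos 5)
Your proof is correct, so let me compare the two routes. For the positivity assertion \eqref{positivo} your argument is essentially the paper's: the paper uses $z(w)\equiv 0$ as a lower solution to get $y_{c,f}^-\geq 0$ and then rules out an interior zero $w_1$ by computing $(y_{c,f}^-)'(w_1)=-\hat{f}(w_1)<0$ from the equation; your interior-minimum phrasing is the same computation. Where you genuinely diverge is in \eqref{sotto1}: the paper argues by contradiction, supposing $\limsup_{w\to\bar w^+}y_{c,f}^-(w)=1$ for some $\bar w\in[0,D_1)$ and extracting a sequence $a_n\to\bar w$ along which $(y_{c,f}^-)'(a_n)\leq 0$ while the equation (with the coefficient of $R$ kept positive by the hypothesis on $c$) forces $(y_{c,f}^-)'(a_n)\to+\infty$, since $R$ blows up at $1$. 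You instead produce the explicit constant supersolution $\bar y=R^{-1}(\mu)$ with $\mu=\sup_{w}\hat{d}(w)\hat{f}(w)/(c+\hat{h'}(w))$, finite and strictly below the singularity precisely because the hypothesis on $c$ bounds $c+\hat{h'}$ below by a positive constant. This buys something: a quantitative uniform bound $y_{c,f}^-\leq R^{-1}(\mu)<1$, from which global definedness of $y_{c,f}^-$ on $[0,D_1]$ (the fact actually invoked at the start of the proof of Theorem \ref{teorema}) follows at once, whereas the paper's argument is purely qualitative. Two small points to tighten. First, the weak inequality $dy/ds\leq 0$ \emph{at} the level $\bar y$ does not by itself prevent crossing; you should add that for $y>\bar y$ the strict monotonicity of $R$ makes the field strictly negative, so a first-crossing argument (or the level $R^{-1}(\mu+\varepsilon)$) closes the barrier step --- this is routine but should be said. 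Second, your parenthetical observation that positivity can only hold on the open interval $(D(u_0),D_1)$, since $y_{c,f}^-(D_1)=0$ by definition, is accurate: the closed endpoint in \eqref{positivo} is a slip in the statement, and the paper's own proof shares it (at $w_1=D_1$ the equation gives derivative $-\hat{f}(D_1)=-f(1)=0$, not a negative quantity, so the contradiction argument does not apply there).
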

\begin{proof}
We start proving \eqref{sotto1}. Let us fix $c \geq 2 \sqrt{\sup_{u \in [0, 1]} \frac{d(u)f(u)}{u}} - \min_{u \in [0, 1]} h'(u)$;
we notice
that, with this choice, it
holds
\begin{equation}\label{positivita}
c+\min_{u \in [0, 1]} h'(u) > 0.
\end{equation}
Assume by contradiction that there exists $\bar{w} \in [0, D_1)\,$ such that  
$$
\limsup_{w \to \bar{w}^+} y_{c, f}^-(w) = 1;
$$
since $y_{c, f}^-$ is continuous and $y_{c, f}^-(D_1)=0$, we can assume, without loss of generality, that 
$$
y_{c, f}^-(w) < 1 \quad \textrm{ for } w > \bar{w}.
$$
Then, using the differential equation in \eqref{cauchyback}, thanks to \eqref{positivita} - which ensures that the coefficient in front of
$R(y)$ therein is always positive - we are able to construct a sequence
$a_n\to\bar w$ satisfying, at the same time,  
$$
(y_{c, f}^-)'(a_n)\le 0 \quad \textnormal{and} \quad \lim_{n \to \infty} (y_{c, f}^-)'(a_n) = +\infty, 
$$
a contradiction. \newline
For the second part of the Lemma, let us now observe that $z(w) \equiv 0$ is a lower solution for \eqref{cauchyback} in $[D(u_0), 1]$, so that $y_{c, f}^-(w) \geq 0$ for every $w
\in [D(u_0), 1]$. On the other hand, if there existed $w_1 \in \,(D(u_0), 1]$ such that $y_{c, f}^-(w_1)=0$, then the differential equation
in
\eqref{cauchyback} would lead to $(y_{c, f}^-)'(w_1) < 0$, implying that $y_{c, f}^-$ is decreasing in a neighborhood of $w_1$, a
contradiction. Hence, \eqref{positivo} is proved. 
\end{proof}
\begin{proof}[\bf Proof of Theorem \ref{teorema}]
Lemma \ref{necessariaBF} ensures the first inequality in \eqref{stimalefinaleC}; our aim is now to show that every $c \in \mathbb{R}$ such
that
\begin{equation}\label{altrastima}
c \geq 2 \sqrt{\sup_{u \in [0, 1]} \frac{d(u)f(u)}{u}} - \min_{u \in [0, 1]} h'(u)
\end{equation}
is admissible.
In view of Lemma \ref{valoriback}, the solution $y_{c, f}^-(w)$ is defined for every $w \in [0, D_1]$. 
On the other hand, fixed $c$ satisfying \eqref{altrastima} we want to construct a positive lower solution to \eqref{cauchyforward}. To
this end, we  use the solutions to 
\begin{equation}\label{cauchyaux}
\left\{
\begin{array}{l}
\displaystyle y'= \frac{\beta }{d(D^{-1}(w))} \frac{\sqrt{y(2-y)}}{1-y}   \\
\\
y(0) = 0,
\end{array}
\right.
\end{equation}
in dependence of $\beta > 0$. 
Indeed, noticing that $\int 1/d(D^{-1}(w)) \, dw = D^{-1}(w)$,
problem \eqref{cauchyaux} has the solution 
$$
z(w)=1-\sqrt{1-\beta^2 D^{-1}(w)^2},
$$
that is defined in a suitable right neighborhood $[0, \sigma)\,$ of $w=0$, for some $\sigma >0$. We want to choose $\beta$ in such a way that $z(w)$
solves the differential
inequality
$$
z' \leq (c+\hat{h}'(w))R(z) - \hat{f}(w)
$$
in a right neighborhood of $w=0$. Since
$$
R(z(w))  =\frac{\beta D^{-1}(w)}{\sqrt{1-\beta^2 D^{-1}(w)^2}},
$$
we thus have to choose $\beta > 0$ so as to guarantee that, for every $w \in [0, D_1]$, it holds
$$
\frac{\beta^2 D^{-1}(w)}{d(D^{-1}(w))\sqrt{1-\beta^2 D^{-1}(w)^2}} - \frac{\beta (c+ h'(D^{-1}(w)))D^{-1}(w)}{d(D^{-1}(w))\sqrt{1-\beta^2
D^{-1}(w)^2}} + f(D^{-1}(w)) \leq 0 ,
$$
whence, for every $u \in [0, 1]$,  
\begin{equation}\label{stimabeta}
\beta^2 u - \beta (c+ h'(u))u + u \frac{f(u) d(u)}{u} \sqrt{1-\beta^2 u^2} \leq 0.
\end{equation}
Dividing by $u > 0$, taking the maximum for $u \in [0, 1]$ and solving for $\beta$, 
we end up with
$$
\beta=\frac{1}{2} \left[c+\min_{u \in [0, 1]} h'(u) \pm \sqrt{(c+\min_{u \in [0, 1]}h'(u))^2 - 4\sup_{u \in [0, 1]} \frac{d(u)
f(u)}{u}}\;\right],
$$
which yields a positive value of $\beta$ thanks to \eqref{altrastima} (which, as already observed, implies that $c+\min_{u \in [0, 1]} h'(u)
>
0$).
Thus, for this choice of $\beta$, $z(w)$ is a positive lower solution to \eqref{cauchyforward} 
in the interval $[0, \sigma)$. As a consequence, there exists a solution $y_{c, f}^+(w)$ to \eqref{cauchyforward} satisfying  
\begin{equation}\label{positiva}
y_{c, f}^+(w) \geq z(w) > 0 \quad \textrm{ for every } w \in \,(0, \sigma)\,.
\end{equation}
The argument is now a standard uniqueness one (see \cite{GarSan} for further details): by continuity and
connectedness (in view of the previous considerations about $y_{c, f}^-(w)$), the two solutions $y_{c,
f}^+(w)$ and $y_{c, f}^-(w)$ have to intersect, hence they have to be the same. Thus, we have found 
the desired solution to problem $(\star)$.
\end{proof}
\begin{remark}\label{ilcurvatura}
\textnormal{As already anticipated, our explicit choice of $P(s)$ is mainly due to the importance of the curvature operator in literature and to
the need for a certain readability of the main statement; however, it can be seen that hypothesis $\textbf{(P)}$ (together with
the consequent properties of $R(y)$) is sufficient in order to
perform the above arguments. Indeed, $P'(0)=1$ guarantees, through l'H\^opital rule, that $R(y) \thicksim \sqrt{2y}$
for $y \to 0$, so that Lemma \ref{necessariaBF} holds true also under assumption $\textbf{(P)}$ (otherwise, using the same reasonings we
would obtain a different bound on $c^*$); on the other hand, \eqref{singolarità} yields a singularity for $R(y)$ (which in the case \eqref{curvat} is obtained for $y=1$).} 
\end{remark}
We now examine some statements which can be obtained through the first-order approach - possibly directly from Theorem \ref{teorema} - for
some particular cases of equation \eqref{NLFB}, among which, the Fisher and the Burgers ones.

\subsection{Quasilinear Fisher-type equations}

Setting $h(u) \equiv k \in \mathbb{R}$, and assuming $f \in \mathcal{A}$ and $P(s)$ as in \eqref{curvat}, equation \eqref{QFB}  turns into
\begin{equation}\label{PDEFisher}
\d_t u=\d_x\left(\frac{\d_x(D(u))}{\sqrt{1+\d_x(D(u))^2}}\right) + f(u).
\end{equation}
In the case $D(u) = u$, the admissible speeds for \eqref{PDEFisher} were determined in
\cite[Proposition 3.2]{GarSan}. We can now obtain such a result
as a consequence of the following corollary. 
\begin{corollary}\label{ilcorollario}
Let $f \in \mathcal{A}$. Then, there exists $c^* > 0$,
with
$$
2\sqrt{d(0) f'(0)} \leq c^* \leq 2 \sqrt{\sup_{u \in [0, 1]} \frac{d(u)f(u)}{u}},
$$
such that \eqref{PDEFisher} has a heteroclinic traveling wave connecting monotonically $0$ and $1$ if and only if $c \geq c^*$.
\end{corollary}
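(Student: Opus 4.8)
The plan is to obtain Corollary \ref{ilcorollario} as a direct specialization of Theorem \ref{teorema}. First I would observe that choosing $h(u) \equiv k$ constant in \eqref{NLFB} reduces that equation to precisely \eqref{PDEFisher}, and forces $h'(u) \equiv 0$ on the whole interval $[0,1]$; in particular $h'(0)=0$ and $\min_{u \in [0,1]} h'(u)=0$. Since $f \in \mathcal{A}$ and assumption $\mathbf{(D)}$ are in force, all the hypotheses of Theorem \ref{teorema} hold. The theorem then applies verbatim: there is a $c^* \in \mathbb{R}$ such that problem \eqref{IIord}---which, for $h'\equiv 0$, is exactly the traveling-wave ODE attached to \eqref{PDEFisher}---admits a solution if and only if $c \geq c^*$. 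The monotone heteroclinic connection $u(t,x)=v(x+ct)$ is read off directly from the solution $v$, because \eqref{IIord} already prescribes $v(-\infty)=0$, $v(+\infty)=1$ and $v'>0$.

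Next I would substitute $h'(0)=0$ and $\min_{u} h'(u)=0$ into the two-sided estimate \eqref{stimalefinaleC}. The two convective contributions $-h'(0)$ and $-\min_u h'(u)$ vanish, so \eqref{stimalefinaleC} collapses to
$$
2\sqrt{d(0) f'(0)} \;\leq\; c^* \;\leq\; 2\sqrt{\sup_{u \in [0,1]} \frac{d(u)f(u)}{u}},
$$
which is exactly the bound claimed in the corollary. This already settles the existence, the characterization ``if and only if $c\geq c^*$'', and the quantitative estimates; it remains only to upgrade the lower bound, which gives $c^* \geq 2\sqrt{d(0)f'(0)} \geq 0$, to the strict positivity $c^*>0$.

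This last point is the only one requiring a little care, and I expect it to be the main (indeed essentially the only) obstacle, since the lower estimate in \eqref{stimalefinaleC} degenerates to $c^*\geq 0$ when $f'(0)=0$. To handle it I would not rely on \eqref{stimalefinaleC} but rather on the sharper conclusion obtained inside the proof of Lemma \ref{necessariaBF}: any admissible speed necessarily satisfies $c+h'(0)>0$, which for $h'(0)=0$ reads $c>0$. Since Theorem \ref{teorema} identifies the admissible set as the closed half-line $[c^*,+\infty)$, the value $c=c^*$ is itself admissible and carries a solution of $(\star)$ positive in a right neighborhood of $0$; applying the strict inequality of Lemma \ref{necessariaBF} to this $c=c^*$ yields $c^*>0$, completing the argument.
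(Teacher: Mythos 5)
Your proposal follows exactly the paper's route: Corollary \ref{ilcorollario} is presented there with no separate proof, precisely as the specialization of Theorem \ref{teorema} to constant $h$ (hence $h'\equiv 0$), under which both convective corrections in \eqref{stimalefinaleC} vanish and \eqref{IIord} becomes the traveling-wave problem for \eqref{PDEFisher}. So the core of your argument is correct and coincides with the paper's.

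The only place you go beyond the paper is the strict inequality $c^*>0$, and there your mechanism leans on the weakest link available. When $f'(0)>0$, the lower bound $2\sqrt{d(0)f'(0)}>0$ already settles positivity with no extra work, so your appeal to the inner claim ``$c+h'(0)>0$'' of Lemma \ref{necessariaBF} matters only in the degenerate case $f'(0)=0$; but in that case the derivation of that claim inside the lemma (the identity $l=\tfrac12\bigl[\sqrt{2}\,(c+h'(0))/d(0)-\hat f'(0)/l\bigr]$, solved under the assumption $l>0$) is exactly where the argument is not airtight, since the term $\hat f'(0)/l$ is an indeterminate form when $\hat f'(0)=0$ and $l$ may a priori vanish. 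A more robust and elementary substitute is available: with $h'\equiv 0$ and $c\le 0$, any solution of the ODE in $(\star)$ satisfies $y'(w)\le -\hat f(w)<0$ for all $w\in(0,D_1)$ where $0\le y(w)<1$, so no solution positive on $(0,D_1)$ can emanate from $y(0)=0$; hence every admissible speed is positive, and since $c=c^*$ is itself admissible by the ``if and only if'' in Theorem \ref{teorema}, it follows that $c^*>0$. (Equivalently, integrating $(P[(D(v))'])'-cv'+f(v)=0$ over $\mathbb{R}$ and using $v'(\pm\infty)=0$ gives $c=\int_{\mathbb{R}}f(v(t))\,dt>0$ for any admissible $c$.) I would replace your last step by this observation; the rest stands as written.
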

Indeed, whenever $f(u) \leq f'(0)u$ on $[0, 1]$, the case $d(u) \equiv 1$ yields in turns $c^*=2 \sqrt{f'(0)}$, as
in
\cite{GarSan}; 
actually, an estimate of $c^*$ was therein given under the weaker bound 
$$
f(u) \leq \frac{Mu}{\sqrt{1-\min\{M, 1\}u^2}},
$$
in terms of $\sqrt{M}$. However, this is the case also for us, by slightly
modifying the end of the proof of Theorem
\ref{teorema}. The nonlinear Fisher equation, obtained by choosing $f(u)=u(1-u)$, fits into this framework.

\subsection{Quasilinear Burgers-type equations}

With reference to equation \eqref{QFB}, we now consider the case when $P(s)$ is given by \eqref{curvat} and $f(u) \equiv 0$, and we
analyze the associated first-order model. Explicitly, we thus study the viscous conservation law
\begin{equation}\label{PDEB}
\d_t u=\d_x\left(\frac{\d_x(D(u))}{\sqrt{1+\d_x(D(u))^2}}\right)  - \d_x h(u),
\end{equation}
which reduces to the Burgers equation with mean-curvature type diffusion when choosing $h(u)=u^2/2$. By proceeding with the change of variables 
described in Section \ref{sez2}, we
 have to solve the first order two-point problem
\begin{equation}\label{BIord}
\left\{
\begin{array}{l}
\displaystyle y'= \frac{c+\hat{h'}(w)}{\hat{d}(w)} \frac{\sqrt{y(2-y)}}{1-y},  \vspace{0.1 cm} \\
y(0)=0=y(D_1),\; y(w) > 0 \textnormal{ on } (0, D_1).
\end{array}
\right.
\end{equation}
We observe that, if $c < -h'(0)$, then the solution shot forward from $0$ is identically zero; hence, we restrict our attention to the speeds $c$ such that $c \geq -h'(0)$ (observe that this bound is also given by Lemma
\ref{necessariaBF}). \newline
It is now immediate to see that Theorem \ref{teorema} cannot be applied to this equation: 
since assumption $(\mathbf{F})$ is not satisfied, Lemma \ref{valoriback} fails. The difficulty lies in the fact that it is not ensured that both the forward solution $y_{c, f}^+$ and the backward one $y_{c, f}^-$ are both positive and everywhere defined. 
Thus, an intersection between $y_{c, f}^+(w)$ and $y_{c, f}^-(w)$ may not appear anymore.  
Finally, in this case we may also lose uniqueness from $D_1$. 
\newline
Hence, at first, we need to understand the behavior of the solutions shot backward
from $D_1$: denoted by $y(w)$ one of these solutions, the particular structure of the differential equation in \eqref{BIord} allows us
to directly integrate by
separation of variables, obtaining
$$
-\sqrt{y(w)(2-y(w))}=c-cD^{-1}(w)+h(1)-h(D^{-1}(w)). 
$$
Thus, the right-hand side in such an equality has to be negative for $w \in (0, D_1)$, so that it has to be
\begin{equation}\label{acca1}
c-cu+h(1)-h(u) < 0, \quad \textrm{ for every } u \in (0, 1).
\end{equation}
Since it is required that $0 < y(w) < 1$ for every $w \in (0, D_1)$, it now follows that
$$
y_c^-(w)=1-\sqrt{1-(cD^{-1}(w)-c+h(D^{-1}(w))-h(1))^2}, \quad \textrm{ for every } w \in (0, D_1),
$$
and
\begin{equation}\label{acca2}
cu-c+h(u)-h(1) < 1, \quad \textrm{ for every } u \in (0, 1).
\end{equation}
If \eqref{acca1} and \eqref{acca2} hold, we now impose $y_{c}^-(0)=0$ and obtain a unique admissible value of $c$ for the existence of an increasing \{0, 1\}-connection:
$$
c=-h(1). 
$$
This is compatible with \eqref{acca1} and \eqref{acca2} only if 
\begin{equation}\label{accafinale}
h(u)-1 < h(1) u < h(u) \quad \textrm{ for every } u \in (0, 1).
\end{equation}
Such a condition imposes a concavity shape on $h(u)$, and is indeed not fulfilled by the Burgers flux $h(u)=u^2/2$, as is expected in view of the portrait for the Burgers equation highlighted in the Introduction. On the other hand, a concave flux like $h(u)=-u^2$ enters this setting and we observe that in this case $c^*=1$.   
Of course, this argument works whatever the limits the heteroclinic solution has to approach (we recall
that, for equation \eqref{PDEB}, all the constants are equilibria);
in particular, fixed two equilibria $u_-$ and $u_+$ to be connected, direct
computations show that, under the validity of conditions analogous to \eqref{acca1} and \eqref{acca2}, it has to be
\begin{equation}\label{velocitaTV}
c=\frac{ h(u_-)-h(u_+)}{u_+-u_-}.
\end{equation}
On the other hand, according to \cite[Remark 2.3]{GarSan}, \emph{decreasing} \{0, 1\}-connections for \eqref{PDEB} can be obtained by solving  
$$
\left\{
\begin{array}{l}
\displaystyle y'= -\frac{c+\hat{h'}(w)}{\hat{d}(w)} \frac{\sqrt{y(2-y)}}{1-y},  \vspace{0.1 cm} \\
y(0)=0=y(D_1),\; y(w) > 0 \textnormal{ on } (0, D_1),
\end{array}
\right.
$$
and a similar argument can be performed, showing that a decreasing connection exists, taking $c$ as in \eqref{velocitaTV}, under the validity of 
\begin{equation}\label{accafinale2}
h(u) < h(1) u < h(u)+1 \quad \textrm{ for every } u \in (0, 1).
\end{equation}
We can thus state the following proposition.
\begin{proposition}\label{PropB}
Let $h(u)$ satisfy either condition \eqref{accafinale} or \eqref{accafinale2}. Then, a monotone 
(increasing or decreasing, respectively) 
$\{0, 1\}$--connection of traveling wave type $u(t, x)=v(x+ct)$ solving \eqref{PDEB} exists
if and only if $c=-h(1)$. 
\end{proposition}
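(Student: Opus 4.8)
The plan is to use the fact that, when $f\equiv 0$, the equation in \eqref{BIord} is \emph{separable} wherever $y$ is positive, so the whole analysis can be made explicit; the proposition then merely packages the computation already sketched above. First I would treat increasing connections (problem \eqref{BIord}), the decreasing case being symmetric. After the reduction of Section~\ref{sez2}, an increasing $\{0,1\}$-connection of \eqref{PDEB} is exactly a solution $y$ of \eqref{BIord} with $0<y<1$ on $(0,D_1)$ and $y(0)=y(D_1)=0$. On the open interval the right-hand side is locally Lipschitz (since $y>0$ there), so I would separate variables: using the identity $\tfrac{d}{dy}\sqrt{y(2-y)}=\tfrac{1-y}{\sqrt{y(2-y)}}$ and the substitution $u=D^{-1}(w)$ (which turns $\int (c+\hat h'(w))/\hat d(w)\,dw$ into $cu+h(u)$), this produces the first integral $\sqrt{y(2-y)}=cD^{-1}(w)+h(D^{-1}(w))+C$.

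Next I would let the boundary data select the constant and the speed. Imposing $y(D_1)=0$, i.e. evaluating at $u=1$, forces $C=-c-h(1)$, which is exactly the explicit profile $y_c^-$ written before the statement; since $\sqrt{y(2-y)}\ge 0$ and must stay below $1$, positivity and the bound $y<1$ on $(0,1)$ are precisely \eqref{acca1} and \eqref{acca2}, i.e. $0<cu+h(u)-c-h(1)<1$. Imposing the remaining condition $y(0)=0$ (evaluate at $u=0$, using $h(0)=0$) makes the first integral vanish there, whence necessarily $c=-h(1)$. For the converse I would note that \emph{any} increasing connection has $y>0$ inside, hence satisfies this same first integral on $(0,D_1)$ (the trivial branch $y\equiv 0$ flagged in the text is excluded), so $c=-h(1)$ is also the only possible speed. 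Finally, substituting $c=-h(1)$ into $0<cu+h(u)-c-h(1)<1$ collapses it, via $h(0)=0$, exactly to $h(u)-1<h(1)u<h(u)$, that is to \eqref{accafinale}; thus under \eqref{accafinale} the profile $y_{-h(1)}^-$ does satisfy $0<y<1$ on $(0,D_1)$ with both endpoint conditions, giving a genuine solution, and the equivalence for increasing connections follows.

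For decreasing connections I would repeat the three steps with the minus sign in the governing equation. The single sign change flips the first integral to $\sqrt{y(2-y)}=-(cu+h(u))+C$, turns the admissibility constraints into $-1<cu+h(u)-c-h(1)<0$, and the left endpoint condition again yields $c=-h(1)$; substituting this value reduces the constraints to $h(u)<h(1)u<h(u)+1$, i.e. to \eqref{accafinale2}. The only genuinely delicate point is the loss of Lipschitz regularity of the right-hand side at $y=0$, which is exactly the loss of uniqueness from $D_1$ mentioned in the text: I would make sure to argue that a monotone connection cannot be the constant solution $y\equiv 0$, so that, being strictly positive on the interior, it must coincide with the nontrivial separated-variables branch, and only then do the two boundary conditions single out the unique admissible speed $c=-h(1)$.
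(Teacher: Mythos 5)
Your proposal is correct and takes essentially the same route as the paper: the paper's own ``proof'' is exactly the separation-of-variables computation carried out before the statement, which yields the explicit backward profile $y_c^-(w)$, reads off the admissibility constraints \eqref{acca1}--\eqref{acca2} from $0<y<1$, forces $c=-h(1)$ by imposing $y(0)=0$ (and symmetrically for the sign-flipped equation in the decreasing case), and observes that these constraints collapse to \eqref{accafinale}, resp.\ \eqref{accafinale2}. Your extra care at the non-Lipschitz point $y=0$ --- excluding the trivial branch so that every genuine connection must obey the first integral --- is the same point the paper flags as possible loss of uniqueness from $D_1$, so no substantive difference remains.
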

Such a proposition can be easily adapted to $\{u_-, u_+\}$-connections (with $u_-$, $u_+$ arbitrary distinct equilibria), suitably changing conditions \eqref{accafinale} and \eqref{accafinale2}, and can be applied, for instance, to the Burgers case 
$h(u)=u^2/2$ recalled in the Introduction, recovering the mentioned picture.

\subsection{Quasilinear Fisher-Burgers type equation}

To make our discussion complete, we here consider both the effects of a reaction and a convection term,
choosing for simplicity $P(s)$ as in \eqref{curvat}, $h(u)= \kappa u^2$, for some $\kappa >0$, and $f \in \mathcal{A}$ such
that $f(u) \leq f'(0)\, u$ for all $u \in [0,1]$ (the quasilinear Fisher-Burgers equation is recovered for the choices
$\kappa=1/2$ and $f(u)=u(1-u)$).
\newline
From \eqref{stimalefinaleC}, it follows immediately that it holds
 \begin{equation*}
2\sqrt{d(0) f'(0)} \leq c^* \leq 2 \sqrt{d_1 f'(0)}.
\end{equation*}
In particular, in the Fisher-Burgers case with $d(u) \equiv 1$, it holds  $f'(0)=1$ and the critical speed $c^*$ is exactly given by
$c^* = 2$.
On the other hand, if we apply formula \eqref{stimalefinaleC} in the case $h(u)=\kappa u^2$ for $\kappa < 0$, \eqref{stimalefinaleC}
 only leads, in general, to an estimate of $c^*$. Explicitly, we have 
$$
2\sqrt{d(0) \,f'(0)} \leq c^* \leq 2\sqrt{d_1
\,f'(0)}-2\kappa
$$ 
and it appears more complicated, in view of the nonlinear diffusion, to obtain the precise value of $c^*$ as in \cite[Section
5]{MalMar02}. However,
the case of convex fluxes (corresponding to the choice $\kappa > 0$) appears to be more significant in literature. 

\section{Admissible positive speeds for reaction terms of type B and C} \label{sez4}

We are now interested in studying the admissible speeds for problem $(\star)$ when $f$ may
vanish inside the interval $(0, 1)$. We will see that, in general, the presence of a convective term 
changes the picture with respect to the case $h(u) \equiv 0$, and the situation is more
complicated than in the previous section: even if $\int_0^1 f(u) \, du > 0$, the occurrence
of a unique {\it negative} admissible speed (compare, for instance, with \cite{DePSan00}), as well as the nonexistence of admissible speeds, are here possible. We refer the reader to Section \ref{sez5} for more details. 
\newline
To begin with, in this section we
thus first deal with the problem of the existence of {\it positive} admissible speeds. 
One possible motivation for the
search for positive speeds, among the others, is that
they possess some interesting features in dependence on a small parameter braking the diffusion, as we will see in Section \ref{sez6}. Moreover, there are cases where we have no chance for the existence of negative speeds: for instance, if $f \in
\mathcal B$ and $h'(0)=0$ (as in the Burgers case), then $c < 0$ implies that the only solution to the differential equation in $(\star)$ with $y(0)=0$ is the zero one.
\smallbreak
As in Section \ref{sez3}, we restrict ourselves to the case when $P(s)$ is of mean curvature type, defined in \eqref{curvat}. Moreover, we
assume the following condition on the convective term $h$:
\begin{itemize}
 \item[$\mathbf{(H')}$]  $h: [0, 1] \to \mathbb{R}$ is a $C^1$-function such that $h(0)=0$ and $h'(u) > 0$ for every $u \in (0,
1)$.
\end{itemize}
This is the case, for instance, of a convex convective term like the one appearing in the classical Burgers equation. \newline 
We first state a preliminary lemma regarding again some monotonicity properties of the solutions
with respect to $c > 0$. As before, we denote by $y_{c, f}^-$ the solution
to \eqref{cauchyback}, which is unique thanks to the fact that $R(y)$ is increasing. On the other hand, this time we also have uniqueness
(and continuous dependence on $c > 0$) of the
positive solution to \eqref{cauchyforward}, which will be denoted by $y_{c, f}^+$; this follows from direct integration if $f \in
\mathcal{B}$, and 
from \cite[Theorem 3.9]{BonSan} if $f \in \mathcal{C}$. 
\begin{lemma}\label{lemma1}
Let $f \in \mathcal{B}$ or $f \in \mathcal{C}$ and let $0<c_1<c_2$. Then, the following inequalities hold:
\begin{equation*}
\begin{aligned}
&y^+_{c_2,f}(w) \geq y^+_{c_1,f}(w), \quad &\forall \ w \in (0,D_1) \ \ &{\rm s.t.} \ \ y^+_{c_2,f}(w) <1, \\
&y^+_{c_2,f}(w) > y^+_{c_1,f}(w), \quad &\forall \ w \in (0,D(u_0)) \ \ &{\rm s.t.} \ \ y^+_{c_2,f}(w) <1,
\end{aligned}
\end{equation*}
being $y^+_{c,f}(w)$ the unique (positive) solution to \eqref{cauchyforward}. Similarly,
\begin{equation*}
\begin{aligned}
&y^-_{c_2,f}(w) \leq y^-_{c_1,f}(w), \quad &\forall \ w \in (0,D_1), \ \  & \ \ \\
&y^-_{c_2,f}(w) < y^-_{c_1,f}(w), \quad &\forall \ w \in (0, D_1) \ \ &{\rm s.t.} \ \ y^-_{c_2,f}(w) > 0,
\end{aligned}
\end{equation*}
being $y^-_{c,f}(w)$ the unique solution to \eqref{cauchyback}. Moreover, if $y^-_{c_2,f}(D(u_0)) > 0$, then 
\begin{equation*}
y^+_{c_2,f}(D(u_0)) >y^+_{c_1,f}(D(u_0)) \quad {\rm and} \quad y^-_{c_2,f}(D(u_0)) < y^-_{c_1,f}(D(u_0)).
\end{equation*}
\end{lemma}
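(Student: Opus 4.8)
The plan is to treat all the inequalities (including the final pair at $w=D(u_0)$) with one comparison engine, based on the monotone dependence of the vector field on the parameter $c$. Abbreviating the right-hand side of \eqref{cauchyforward}--\eqref{cauchyback} as
\[
G(w,y,c)=\frac{c+\hat{h'}(w)}{\hat{d}(w)}\,R(y)-\hat{f}(w),\qquad R(y)=\frac{\sqrt{y(2-y)}}{1-y},
\]
I would first record the two structural facts that drive everything: $R(y)>0$ for $y\in(0,1)$ (with $R(0)=0$ and $R(y)\to+\infty$ as $y\to 1^-$), and $\hat{d}(w)\ge d_0>0$. Hence, for fixed $(w,y)$ with $y\in(0,1)$, the map $c\mapsto G(w,y,c)$ is strictly increasing. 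The decisive computation — the \emph{contact inequality} — is that if two solutions of the same type, say $y_{c_1}$ and $y_{c_2}$, agree at a point, $y_{c_1}(w_0)=y_{c_2}(w_0)=\bar y\in(0,1)$, then
\[
y_{c_2}'(w_0)-y_{c_1}'(w_0)=\frac{c_2-c_1}{\hat{d}(w_0)}\,R(\bar y)>0 .
\]
Thus $\Delta:=y_{c_2}-y_{c_1}$ is strictly increasing at each of its zeros with common value in $(0,1)$; in particular $\Delta$ can never pass from positive to negative there, which is exactly what forbids the ordering from reversing.

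For the forward solutions I would argue starting from $w=0$. On $(0,D(u_0))$ one has $\hat{f}\le 0$ (indeed $\hat{f}\equiv 0$ for $f\in\mathcal B$ and $\hat{f}<0$ for $f\in\mathcal C$). In the case $f\in\mathcal B$ the equation separates, giving $\int_0^{y}R(s)^{-1}\,ds=\int_0^w \frac{c+\hat{h'}(s)}{\hat{d}(s)}\,ds$, whose right-hand side is strictly increasing in $c$; hence $y^+_{c_2}\ge y^+_{c_1}$ on this interval, strictly for $w>0$. For $f\in\mathcal C$ the positive forward solution is the branch detaching from $0$, unique by \cite[Theorem 3.9]{BonSan}, and the same initial ordering is obtained from the contact inequality together with the detachment asymptotics $R(y)\sim\sqrt{2y}$. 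Once the weak inequality $y^+_{c_2}\ge y^+_{c_1}$ holds on a right neighborhood of $0$, it propagates to every $w$ with $y^+_{c_2}(w)<1$ by the contact inequality (a first down-crossing at a positive common value is impossible), which is the first displayed line. Since both positive solutions stay strictly positive on $(0,D(u_0))$, any contact point there would have $\bar y>0$ and be excluded by the contact inequality; this upgrades the bound to the strict inequality of the second line.

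The backward solutions are handled symmetrically, integrating from $w=D_1$ towards decreasing $w$: in this orientation the field reads $\hat{f}(w)-\frac{c+\hat{h'}(w)}{\hat{d}(w)}R(y)$, which is strictly \emph{decreasing} in $c$ on $(0,1)$, so the roles of $c_1$ and $c_2$ are exchanged and one gets $y^-_{c_2}\le y^-_{c_1}$. The strict version, valid wherever $y^-_{c_2}>0$, again comes from the contact inequality, while the positivity of $y^-$ on $(D(u_0),D_1]$ is exactly Lemma \ref{valoriback}. Finally, the two strict inequalities at $w=D(u_0)$ follow by specializing these statements to the endpoint: the backward one is the strict backward inequality evaluated at $D(u_0)\in(0,D_1)$, legitimate precisely because the hypothesis grants $y^-_{c_2}(D(u_0))>0$; the forward one is obtained by applying the contact inequality at $w=D(u_0)$ to push the (open-interval) strict inequality of the second line up to the endpoint, the common value there being in $(0,1)$.

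I expect the only genuinely delicate point to be the behaviour \emph{as the solutions leave the degenerate level} $y=0$ — at $w=0$ for the forward solutions and at $w=D_1$ for the backward ones. There the $c$-dependence of $G$ vanishes to leading order because $R(0)=0$, so the contact inequality alone does not fix the initial ordering; one has to fall back on the explicit separation of variables (case $\mathcal B$) or on the square-root detachment $R(y)\sim\sqrt{2y}$ together with the uniqueness of the positive branch from \cite[Theorem 3.9]{BonSan} (case $\mathcal C$). Once this initial ordering is settled, every remaining step is the routine propagation described above.
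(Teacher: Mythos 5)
Your comparison engine (monotone dependence of the field on $c$, plus the contact inequality at common values in $(0,1)$, plus positivity of the solutions away from the corners) is the right mechanism, and your propagation steps, the strictness upgrade on $(0,D(u_0))$, and the endpoint argument at $D(u_0)$ are all correct. Note, though, that the paper's own proof is a single line: having established uniqueness of $y^{\pm}_{c,f}$ in the discussion immediately preceding the lemma, it simply invokes the classical comparison theorem of \cite[Exercise 4.1]{Har}. So what you propose is a self-contained reconstruction of that comparison theorem rather than an appeal to it — a legitimate and more elementary route in principle.

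However, the corner behaviour you yourself flag as delicate is a genuine gap, and it is wider than your patches cover. Your explicit-integration patch settles exactly one of the four cases: forward solutions with $f\in\mathcal{B}$, where $\hat{f}\equiv 0$ on $[0,D(u_0)]$ makes the equation separable. For forward solutions with $f\in\mathcal{C}$ you invoke the detachment asymptotics $R(y)\sim\sqrt{2y}$, but you never prove that the solutions actually satisfy $y^+_{c,f}(w)\sim\beta(c)\,w^2$ with $\beta$ increasing in $c$; making this rigorous requires a limsup/liminf argument in the style of Lemma \ref{necessariaBF}, and it is not written down. Worse, for the backward solutions the "symmetric" handling does not exist as stated: near $w=D_1$ one has $\hat{f}>0$ for \emph{both} types $\mathcal{B}$ and $\mathcal{C}$, so separation of variables is unavailable there and your backward initial ordering rests entirely on the same unproved asymptotics. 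The clean repair, which needs no asymptotics at all, is precisely what the paper's citation provides: since $c_1<c_2$, the function $y^+_{c_1,f}$ satisfies the differential inequality $y'\le \frac{(c_2+\hat{h'}(w))}{\hat{d}(w)}R(y)-\hat{f}(w)$, i.e.\ it is a lower solution for the $c_2$-field, hence it lies below the \emph{maximal} solution of \eqref{cauchyforward} with $c=c_2$; the uniqueness recalled before the lemma (direct integration for $\mathcal{B}$, \cite[Theorem 3.9]{BonSan} for $\mathcal{C}$) identifies that maximal solution with $y^+_{c_2,f}$, and the weak ordering at the corner follows in all cases, after which your contact-inequality propagation takes over. (For the backward problems there is an even more elementary fix you did not use: after reversing the variable the field is \emph{decreasing} in $y$, and for such dissipative fields the comparison argument works at every level, including the degenerate level $y=0$, with no uniqueness or asymptotic input.)
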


\begin{proof}
In view of the uniqueness, the statement follows for instance from \cite[Exercise 4.1]{Har}. 
\end{proof}

As a direct consequence of Lemma \ref{lemma1}, in the case $f \in \mathcal B$ or $f \in \mathcal C$, if a positive admissible speed exists then it is necessarily unique. In the figures in the next sections, 
we will depict the connection corresponding to such a unique speed in some different cases (see Figures \ref{figB1}, \ref{figB2}, \ref{figC1} and \ref{figC2}). In this respect, we explicitly mention that we will always show the solution to the first order problem ($\star$) under study; indeed, a direct numerical search for the heteroclinic solution - that is the same, for the admissible speed - for problem \eqref{IIord} is much more difficult, since the boundary conditions $v(-\infty)=0$, $v(+\infty)=1$  require a too high degree of subjectivity in choosing the initial data for the numerical simulations. We thus see another advantage of dealing with the equivalent first-order two-point problem on [0, 1]. We also observe that the simulations have been realized through the use of a numerical
Euler method (using the software Mathematica$^\copyright$), with a sufficiently small step (0.0005) and an accuracy of at least $10^{-6}$, so that the situations depicted appear
sufficiently reliable.

\subsection{Existence of a positive critical speed for $f \in \mathcal{B}$}

We recall that $f \in \mathcal{B}$ means that there exists $u_0 \in (0, 1)$ such that $f(u)=0$ for every $u \in [0, u_0]$ and
$f(u)>0$ in $(u_0, 1)$. To start with, we observe that the unique positive solution to \eqref{cauchyforward} for $c=0$, denoted as before
by $y^+_{0,f}(w)$, can be obtained by
direct integration in the interval $[0, D(u_0)]$, leading to 
\begin{equation}\label{inizio0}
y^+_{0,f}(w)= 1- \sqrt{1-(h(D^{-1}(w))-h(0))^2}.
\end{equation}
In particular, in view of assumption $\mathbf{(H')}$ we immediately see that $y^+_{0,f}(D(u_0))$ exists and $y_{0, f}^+(D(u_0)) < 1$ if and only if $h$ satisfies
\begin{equation}\label{stimah}
 h(u_0) < 1+h(0). 
\end{equation}
Thus, this turns out to be a necessary condition for the existence of a positive admissible speed (otherwise, the solution shot from the
left blows-up). Clearly, such a condition is anyway not sufficient: by the monotonicity properties previously highlighted, the existence of
a positive admissible speed will indeed be possible only if 
\begin{equation}\label{quipos}
y_{0, f}^+(D(u_0)) < y_{0, f}^-(D(u_0)),
\end{equation}
so that some estimate on the backward solution $y_{0, f}^-$ is required, as well. 
\newline
With the following proposition, we aim at showing that it is possible to find explicit situations when \eqref{quipos} is fulfilled,
being aware of the nonoptimality of the statement; actually, a precise estimate of the solutions of the differential equation in ($\star$)
is far from being straight. 
In order to simplify the statement, we here assume that $D(u) = u$, so that $d(u) \equiv 1$; the extension of the statement to density-dependent diffusions is straight, as will be clear from the proof. 
\begin{proposition}\label{possibileB}
Let $f \in \mathcal{B}$ and let $h(u)$ fulfill \eqref{stimah}. Fix $\eta > 0$ such that  
$$
\eta < \min\{F^+, 1\},
$$
where
$F^+=\int_{u_0}^1 f(u) \, du$.
Then, sufficient conditions for the existence of a \emph{positive}
admissible speed for problem $(\star)$ are the following:
\begin{itemize}
 \item[1)] if $F^+ < 1$, 
\begin{equation}\label{accaB1}
\left\{
\begin{array}{l}
h(u_0)-h(0) < \sqrt{1-(1+\eta-F^+)^2} \vspace{0.2 cm} \\ 
\displaystyle h(1)-h(u_0) < \frac{(1-F^+)}{\sqrt{F^+(2-F^+)}}\, \eta \,;  \vspace{0.2 cm} 
\end{array}
\right.
\end{equation}
 \item[2)] if $F^+ \geq 1$, fixed two other constants $\kappa$ with $0 < \kappa < \eta$ and  $\xi_0 \in (u_0, 1)$
such that $\int_{\xi_0}^1 f(u) \, du = 1-\kappa$, 
\begin{equation}\label{accaB2}
\left\{
\begin{array}{l}
\displaystyle h(u_0)-h(0) < \sqrt{1-\eta^2} \vspace{0.2 cm} \\ 
\displaystyle h(\xi_0)-h(u_0) < \sqrt{1-\Big(\frac{\kappa+\eta}{2}\Big)^2}-\sqrt{1-\eta^2} \vspace{0.2 cm} \\
\displaystyle h(1)-h(\xi_0) < \frac{\eta-\kappa}{2} \frac{\kappa}{\sqrt{1-\kappa^2}} \,.
\end{array}
\right.
\end{equation}
\end{itemize}
\end{proposition}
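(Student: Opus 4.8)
The plan is to reduce the statement to the single crossing inequality \eqref{quipos} at $c=0$ and then to verify it under each set of hypotheses. By Lemma \ref{lemma1} the forward value $y^+_{c,f}(u_0)$ is nondecreasing in $c$ while the backward value $y^-_{c,f}(u_0)$ is nonincreasing; moreover, integrating on $[0,u_0]$ (where $f\equiv0$) as in \eqref{inizio0} gives $y^+_{c,f}(u_0)=1-\sqrt{1-(cu_0+h(u_0)-h(0))^2}$, which increases to $1$ as $c$ grows. Hence, once \eqref{quipos} holds at $c=0$, a continuity argument in $c$ (intermediate value, using that forward and backward values are monotone and eventually cross) produces the desired positive admissible speed; so it suffices to check $y^+_{0,f}(u_0)<y^-_{0,f}(u_0)$. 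Writing $\Psi(y):=\sqrt{y(2-y)}$, the explicit formula gives $\Psi(y^+_{0,f}(u_0))=h(u_0)-h(0)$, so the \emph{first} inequality in \eqref{accaB1} (resp.\ \eqref{accaB2}) is exactly the statement $y^+_{0,f}(u_0)<F^+-\eta$ (resp.\ $<1-\eta$). The whole problem thus becomes a lower bound for $y^-_{0,f}(u_0)$.

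The key device for the backward solution is the substitution $\psi(w):=\Psi(y^-_{0,f}(w))$. Since $\Psi'(y)=(1-y)/\sqrt{y(2-y)}=1/R(y)$, the equation in \eqref{cauchyback} (with $c=0$, $d\equiv1$) transforms into $\psi'=h'(w)-f(w)\,\Psi'(y^-_{0,f}(w))$. Integrating from $u_0$ to $1$ and using $\psi(1)=0$ yields the identity $\psi(u_0)=\int_{u_0}^1 f(w)\,\Psi'(y^-_{0,f}(w))\,dw-(h(1)-h(u_0))$. In parallel, because $h'\ge0$ forces $y'\ge-f$, integrating backward gives the a priori bound $y^-_{0,f}(w)\le\int_w^1 f(s)\,ds$ on $[u_0,1]$; as $\Psi'$ is decreasing, I may replace $y^-_{0,f}(w)$ by $\int_w^1 f$ inside the integral without loss.

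For case 1) ($F^+<1$) the a priori bound never reaches $1$, so no singularity of $\Psi'$ is met. Setting $\Phi(w)=\int_w^1 f$ and noting $\frac{d}{dw}\Psi(\Phi(w))=-f(w)\Psi'(\Phi(w))$, the integral telescopes to $\int_{u_0}^1 f\,\Psi'(\Phi)\,dw=\Psi(\Phi(u_0))=\Psi(F^+)$, whence $\psi(u_0)\ge\Psi(F^+)-(h(1)-h(u_0))$. The \emph{second} inequality in \eqref{accaB1} reads $h(1)-h(u_0)<\Psi'(F^+)\,\eta$ (observe $\Psi'(F^+)=(1-F^+)/\sqrt{F^+(2-F^+)}$), and the concavity of $\Psi$ gives $\Psi(F^+)-\Psi(F^+-\eta)\ge\Psi'(F^+)\eta$; combining, $\psi(u_0)>\Psi(F^+-\eta)$, i.e.\ $y^-_{0,f}(u_0)>F^+-\eta>y^+_{0,f}(u_0)$, which is \eqref{quipos}.

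Case 2) ($F^+\ge1$) is where the real difficulty lies: now $\int_w^1 f$ attains $1$ before $w=u_0$, so $\Psi'$ becomes singular (equivalently $R(y)\to\infty$ as $y\to1$) and the one-step estimate breaks. The remedy is the two-step construction encoded in \eqref{accaB2}. Choosing $\xi_0$ with $\int_{\xi_0}^1 f=1-\kappa<1$, the a priori bound keeps $y^-_{0,f}$ below $1-\kappa$ on $[\xi_0,1]$, so the same telescoping plus concavity of $\Psi$ and the \emph{third} inequality in \eqref{accaB2} (whose right-hand side is $\tfrac{\eta-\kappa}{2}\,\Psi'(1-\kappa)$ with $\Psi'(1-\kappa)=\kappa/\sqrt{1-\kappa^2}$) yield $y^-_{0,f}(\xi_0)>1-\tfrac{\kappa+\eta}{2}$. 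On the remaining interval $[u_0,\xi_0]$, where the a priori bound fails, I simply discard the nonnegative term $\int_{u_0}^{\xi_0}f\,\Psi'\ge0$ in the integral identity, getting $\psi(u_0)\ge\psi(\xi_0)-(h(\xi_0)-h(u_0))$; the \emph{second} inequality in \eqref{accaB2} is precisely what pushes this past $\Psi(1-\eta)=\sqrt{1-\eta^2}$, so $y^-_{0,f}(u_0)>1-\eta>y^+_{0,f}(u_0)$. The main obstacle throughout is controlling the solution near $y=1$, where $R$ blows up; the parameters $\eta,\kappa,\xi_0$ are tuned exactly so that this high-value region is traversed on $[u_0,\xi_0]$ (where the positive $f$-contribution is merely dropped) rather than estimated against the singular factor $\Psi'$.
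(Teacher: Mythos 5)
Your proof is correct and follows essentially the same route as the paper's: the same reduction via monotonicity in $c$ to the crossing condition \eqref{quipos} at $c=0$, the same explicit formula bounding the forward solution via the first inequality of \eqref{accaB1} (resp. \eqref{accaB2}), and the same one-step/two-step estimates for the backward solution with the same intermediate targets $F^+-\eta$, $1-\tfrac{\kappa+\eta}{2}$, $1-\eta$. The only difference is cosmetic: you run the backward estimates through the substitution $\psi=\sqrt{y(2-y)}$ (telescoping plus concavity), whereas the paper bounds $R(y^-)\le R(F^+)$ directly in the integrated equation and, on $[u_0,\xi_0]$, compares with the explicit subsolution solving $z'=h'(u)R(z)$ --- which is exactly your ``drop the nonnegative $f$-term'' step in disguise.
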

\begin{proof}
We preliminarily recall that the unique positive solution $y_{0, f}^+$ to \eqref{cauchyforward} for $c=0$ is given by \begin{equation}\label{inizio}
y_{0, f}^+(u) = 1- \sqrt{1-(h(u)-h(0))^2}
\end{equation}
(recall that $d(u) \equiv 1$). 
We first consider case 1). In view of 
$\eqref{accaB1}_i$, formula \eqref{inizio} immediately gives that 
$
y_{0, f}^+(u_0) < F^+ -\eta.
$
On the other hand, $\bar{y}(u)=\int_u^1 f(s) \,
ds$ is a supersolution for
\eqref{cauchyback} which always stays bounded below from $1$. Consequently, in view of the monotonicity of $R(y)$, we have
\begin{eqnarray*}
y_{0, f}^-(u_0) && = \int_{u_0}^1 f(u) \, du - \int_{u_0}^1 h'(u) \frac{\sqrt{(y_{0, f}^-(u))(2-y_{0, f}^-(u))}}{1-y_{0, f}^-(u)} \, du \\
&& > F^+ - (h(1)-h(u_0)) \frac{\sqrt{F^+(2-F^+)}}{1-F^+},
\end{eqnarray*}
where $y_{0, f}^-$ denotes the solution to \eqref{cauchyback} with $c=0$. 
In view of $\eqref{accaB1}_{ii}$, this yields 
$
y_{0, f}^-(u_0) > F^+ -\eta, 
$
so that \eqref{quipos} is fulfilled and the conclusion follows from the usual monotonicity arguments.
\smallbreak
\noindent
On the contrary, if we are in case 2), we use $\eqref{accaB2}_i$ to infer, through formula \eqref{inizio}, that 
$
y_{0, f}^+(u_0) < 1-\eta. 
$
Reasoning similarly as in case 1) in the interval $(\xi_0, 1)$, we also have 
$$
y_{0, f}^-(\xi_0) > 1-\kappa - (h(1)-h(\xi_0)) \frac{\sqrt{1-\kappa^2}}{\kappa},
$$
so that, under the validity of $\eqref{accaB2}_{iii}$, we deduce  
$$
y_{0, f}^-(\xi_0) > 1-\frac{\kappa+\eta}{2}.
$$ 
Now we can apply the same argument as in \cite[Proposition 3.9, (3), Case 2]{GarSan}: in particular, the solution to 
$$
z'=h'(u) \frac{\sqrt{z(2-z)}}{1-z}, \qquad z(\xi_0)=1-\frac{\kappa+\eta}{2}
$$
is a subsolution for $y_{0, f}^-(u)$ in the interval $[u_0, \xi_0]$. 
Since, for every $u \in (u_0,\xi_0)$, 
$$
z(u)=1-\sqrt{1-(h(u)-h(\xi_0)+\sqrt{1-(\kappa+\eta)^2/4})^2},
$$
in view of $\eqref{accaB2}_{ii}$ we obtain that $z(u_0) > 1-\eta$, so that 
$
y_{0, f}^+(u_0) > 1-\eta
$ 
and the conclusion follows as before.
\end{proof}
In the previous statement, the constants $\eta$ and $\kappa$ are meant to control the behavior of the solutions $y_{c, f}^+$ and $y_{c,
f}^-$ and may be chosen quite freely. However, as expectable, it is immediate to see that the
controls $\eqref{accaB1}_i$ and $\eqref{accaB2}_i$ are more restrictive as
long as $\eta$ grows and, symmetrically, the more $\kappa$ approaches $\eta$, the stronger the controls $\eqref{accaB2}_{ii}$ and
$\eqref{accaB2}_{iii}$ become, so that a
certain care is required. The only substantial change in presence of a density-dependent diffusion is represented by the appearance of $d(u)$ under the integral involving $f(u)$, so that the above existence conditions should be slightly modified in order to obtain the same statement. 
\begin{remark}
\textnormal{When no convection effects are present in the considered model, namely  $h'(u) \equiv 0$, assumptions \eqref{accaB1} and 
\eqref{accaB2}  are trivially satisfied, and Proposition \ref{possibileB} 
may thus be adapted so to represent the natural generalization of \cite[Proposition 3.9]{GarSan} to the case of density-dependent diffusions.}
\end{remark}

We conclude this subsection with some numerical simulations showing an application of Proposition \ref{possibileB} when $F^+$ is either
less or greater than one (Figures \ref{figB1} and \ref{figB2} respectively). Also, in Figure \ref{figB3} we show  the necessity of condition
\eqref{stimah} for the existence of increasing $\{0,1\}$-connections.

\begin{figure}[!h]
\centering
\includegraphics[width=11cm,height=6cm]{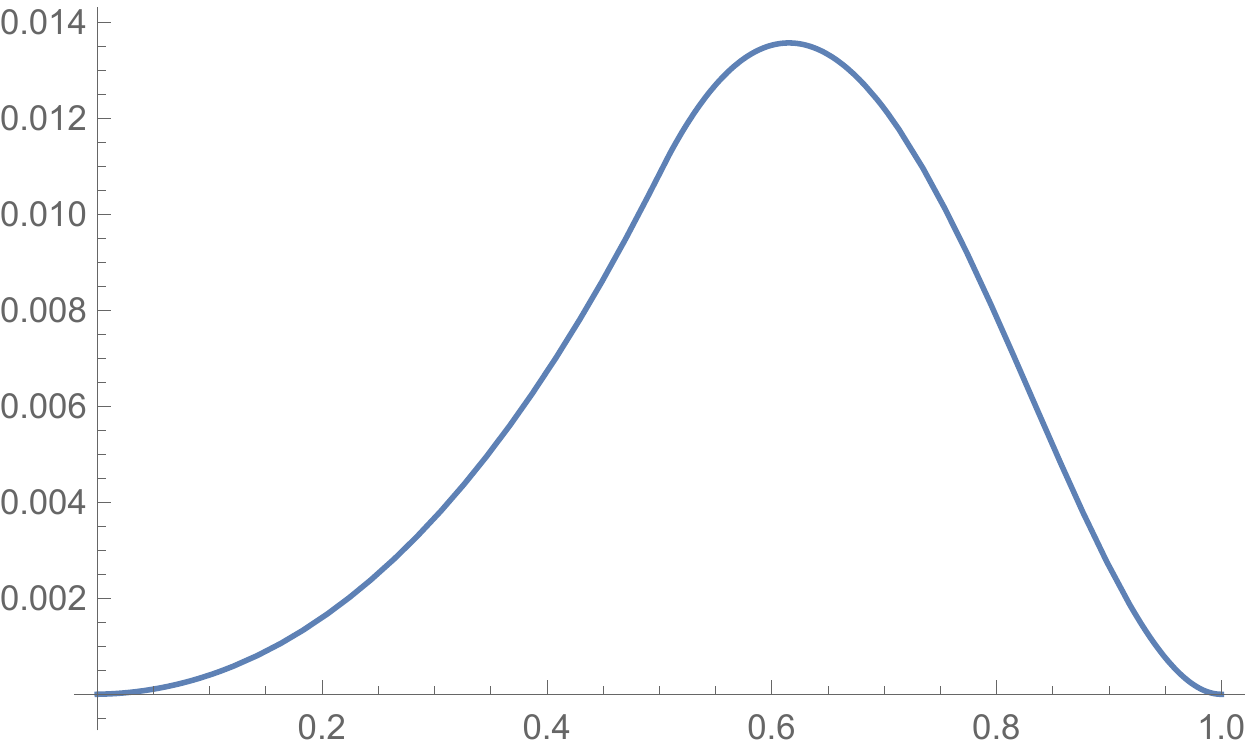}
\caption{\small{As an application of Proposition \ref{possibileB}, we consider the first-order system
$(\star)$ for $h(u)=0.05 u^2$ and $f(u)=2u(1-u)(\max\{0.5, u\}-0.5)$, and we show numerically that a positive admissible speed exists.
Indeed, in this case $F^+=1/32$ and we can take $\eta=1/64$, so that $h(0.5)-h(0)=0.0125$ and $h(1)-h(0.5)=0.0375$ enter condition
\eqref{accaB1}. We find $c^* \approx 0.2663$.}}
\label{figB1}
\end{figure}
\begin{figure}[!h]
\centering
\includegraphics[width=11cm,height=6.7cm]{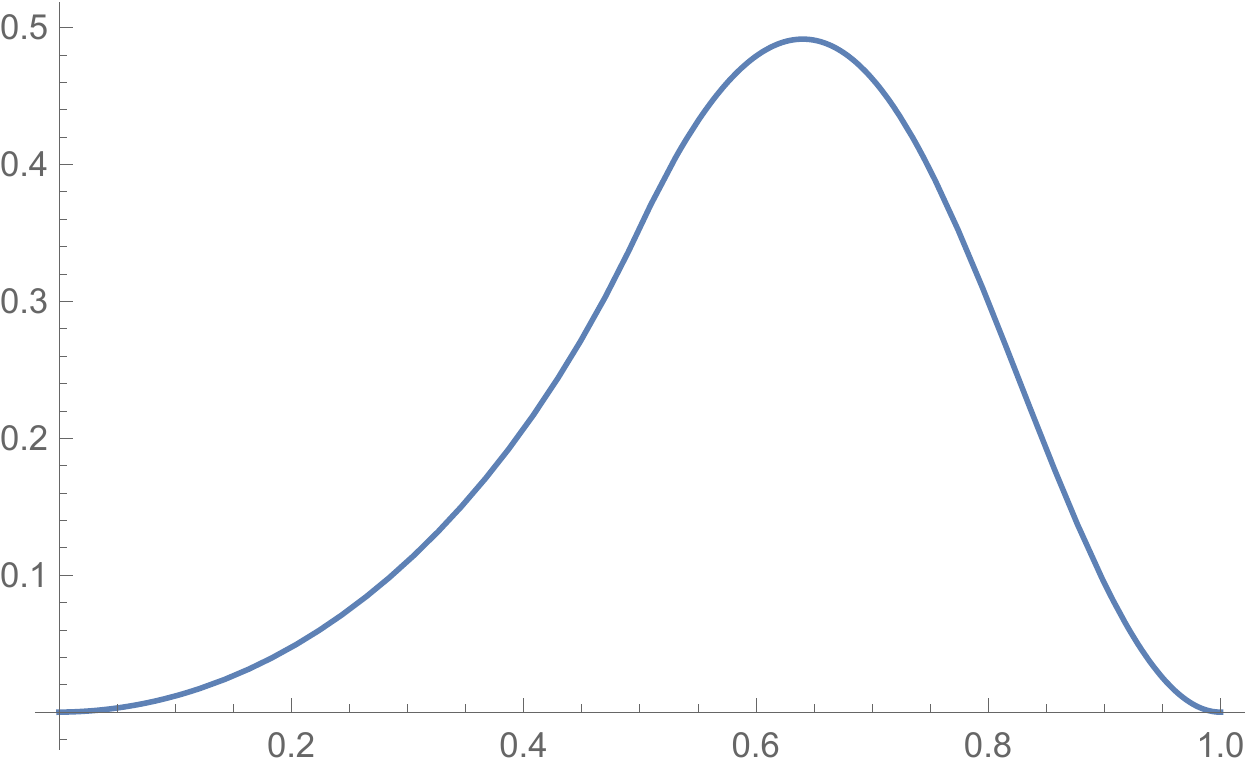}
\caption{\small{Here $h(u)=0.05 u^2$ and the reaction term is given by $f(u)=0$ for $u \in [0, 0.5]$ and $f(u)=1.5 \sin(2\pi x - \pi)(1+2x)$
for  $u \in (0.5, 1)$. It follows that $F^+ \approx 1.19366$ and $\xi_0 \approx 0.676$. Choosing $\eta=0.85$
and $\kappa=0.1$, we find, respectively, the bounds $0.52678, 0.35332, 0.037689$ in \eqref{accaB2}, which are all fulfilled by $h(u)$. We
find $c^* \approx 1.4952$. As is expectable, the estimates for $y_{0, f}^-$, performed in two steps, 
are quite conservative and the result appears less refined than in the case $F^+ < 1$.}}
\label{figB2}
\end{figure}

\begin{figure}[!h]
\centering
\includegraphics[width=11cm,height=6.7cm]{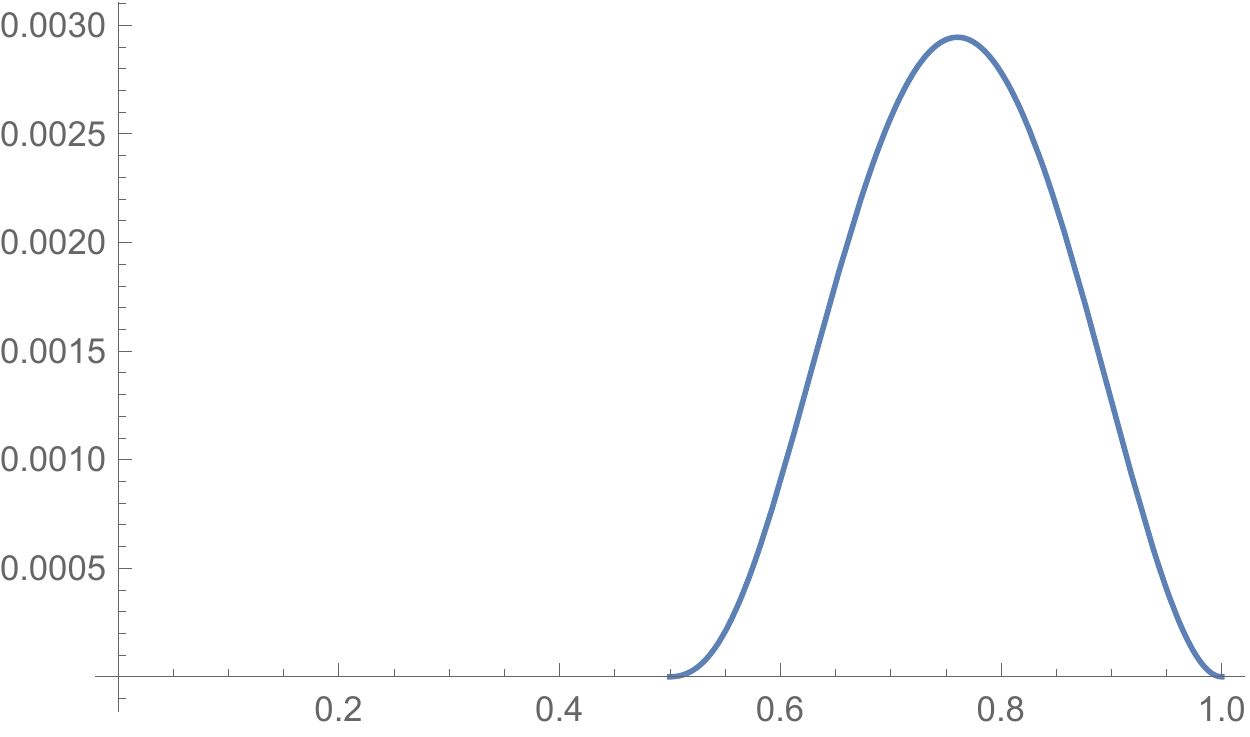}
\caption{\small{In this case, $c=0$, $f(u)$ is as in Figure \ref{figB1}, while $h(u)=\log (u+0.05)$ does not fulfill
the necessary condition \eqref{stimah}. Consequently, we do not have
monotone \{0, 1\}--connections with positive speed. However, since $f$ is of type $A$ when restricted to the interval $[0.5, 1]$, we can apply Theorem
\ref{teorema} and see that a \{0.5-1\}-connection appears already for $c=0$ (indeed,
\eqref{stimalefinaleC} provides here a negative
value of $c^*$, as it is easy to check).}}
\label{figB3}
\end{figure}

\subsection{Existence of a positive critical speed for $f \in \mathcal{C}$}

We now consider $f \in \mathcal{C}$, namely there exists $u_0 \in (0, 1)$ for which $f(u) < 0$ for $u \in (0, u_0)$ and $f(u) > 0$ for $u
\in (u_0, 1)$. As a notation, we set $f^+=\max\{f, 0\}$, $f^-=\max\{-f, 0\}$ and
\begin{equation}\label{notintegrale}
F^+ = \int_0^1 f^+(u) \, du, \qquad F^-= \int_0^1 f^-(u) \, du. 
\end{equation}
In this case, it is immediate to see that the necessary condition \eqref{stimah} may not be sufficient to
ensure that the forward solution $y_{0, f}^+$ to \eqref{cauchyforward} is such that $y_{0, f}^+(D(u_0))$ exists and is bounded away from $1$, since the
function defined in \eqref{inizio} is here only a subsolution. The existence of an admissible speed (either positive or negative) is thus
even more
influenced, in this case, by the interplay between the growths of $h'$ and $f^-$.
\smallbreak
As before, we are going to examine some conditions related to the existence of positive admissible speeds, in the easier case $D(u) = u$ (similarly as before, slight changes allow to cover also the general situation). First, a
necessary condition can be
deduced from the case $h \equiv 0$ (cf. \cite[Proposition 3.9]{GarSan}).
\begin{proposition}\label{ftipoC}
Let $f \in \mathcal{C}$ and let $h$ fulfill $(\mathbf{H}')$. Then, if $c > 0$ is an admissible speed for 
$(\star)$, necessarily it has to be  
\begin{equation}\label{necessf}
\int_0^1 f(u) \, du >0, \qquad {\rm and} \qquad
\int_0^1 f^-(u) \, du <1,
\end{equation}
and \eqref{stimah} has to hold. 
\end{proposition}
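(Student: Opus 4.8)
The plan is to read off the three necessary conditions directly from the differential equation in $(\star)$, which in the present case $D(u)=u$ reads $y'=(c+h'(w))R(y)-f(w)$ with $R(y)=\sqrt{y(2-y)}/(1-y)$, using that an admissible $c>0$ yields a solution $y$ with $y(0)=y(1)=0$ and $0<y(w)<1$ on $(0,1)$. Throughout I will exploit that $c+h'(w)>0$ (by $c>0$ and $(\mathbf{H}')$) and that $R(y(w))>0$ on $(0,1)$, with $y$ bounded away from $1$ on $[0,1]$ by continuity, so that $(c+h'(w))R(y(w))$ is positive and integrable. For the first inequality in \eqref{necessf} I would integrate the equation over the whole interval $[0,1]$: since $y(0)=y(1)=0$ the left-hand side vanishes, leaving $\int_0^1 f(w)\,dw=\int_0^1 (c+h'(w))R(y(w))\,dw>0$.

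For $F^-<1$ I would integrate only over $[0,u_0]$. Since $f\in\mathcal{C}$ gives $f^-=-f$ on $(0,u_0)$ and $f^-\equiv 0$ on $(u_0,1)$, one has $F^-=\int_0^{u_0}(-f)$ by \eqref{notintegrale}; hence $y(u_0)=\int_0^{u_0}(c+h'(w))R(y)\,dw+\int_0^{u_0}(-f(w))\,dw>F^-$, the first integral being strictly positive. As $u_0\in(0,1)$ forces $y(u_0)<1$, this gives $F^-<y(u_0)<1$.

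For \eqref{stimah} I would argue by comparison on $[0,u_0]$. There $f\le 0$, so the reaction-free, zero-speed profile \eqref{inizio}, namely $z(w)=1-\sqrt{1-(h(w)-h(0))^2}$, is a subsolution of the forward problem \eqref{cauchyforward} (as already remarked in the text); using the monotonicity of $R$ and the uniqueness and monotone-dependence framework behind Lemma \ref{lemma1}, this yields $y(w)\ge z(w)$ on $[0,u_0]$. Assume now, by contradiction, that $h(u_0)-h(0)\ge 1$. Since $h$ is strictly increasing with $h(0)-h(0)=0$, there is $w^*\in(0,u_0]$ with $h(w^*)-h(0)=1$, so that $z(w^*)=1$ and therefore $y(w^*)\ge 1$; but $w^*\in(0,1)$ forces $y(w^*)<1$, a contradiction. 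Hence $h(u_0)<1+h(0)$, i.e.\ \eqref{stimah}.

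The two integral identities are immediate once the sign of $(c+h')R(y)$ is noted; the delicate point is the comparison underlying \eqref{stimah}. Here one must be careful because $R$ is not Lipschitz at $y=0$ and blows up at $y=1$, so the sub/supersolution comparison cannot be applied naively at the endpoints. The cleanest route is to invoke the same uniqueness and monotone-dependence results already used for Lemma \ref{lemma1} (the positive forward solution being unique by \cite[Theorem 3.9]{BonSan} for $f\in\mathcal{C}$, cf.\ also \cite{GarSan}) and to locate precisely the abscissa $w^*$ at which the explicit subsolution \eqref{inizio} attains the value $1$, passing to the limit $w\to (w^*)^-$ in the inequality $y\ge z$ to reach the contradiction with $y<1$.
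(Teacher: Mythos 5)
Your proof is correct, and for the first two conditions it takes a genuinely more elementary route than the paper's. The paper proves \eqref{necessf} by contraposition: if $\int_0^1 f\,du \le 0$, it shows $y^+_{0,f}(u_0)\ge -\int_0^{u_0} f(u)\,du \ge y^-_{0,f}(u_0)$ and then excludes every $c>0$ via the monotonicity-in-$c$ statement of Lemma \ref{lemma1}; if $\int_0^1 f^-(u)\,du \ge 1$, it exhibits $z(u)=-\int_0^u f(s)\,ds$ as a lower solution forcing $y^+_{c,f}$ to reach $1$ before $u_0$. Your direct integration of the equation between the boundary conditions --- over $[0,1]$ for the first inequality, and over $[0,u_0]$ together with $F^-<y(u_0)<1$ for the second --- reaches the same strict conclusions without invoking Lemma \ref{lemma1} or any blow-up argument, and is fully self-contained (note the paper only sketches its proof, deferring to \cite{GarSan}). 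For \eqref{stimah} you follow essentially the paper's idea: comparison with the explicit type-B profile \eqref{inizio}. On the delicate step you rightly flag, one clarification: the inequality $y\ge z$ cannot be obtained by viewing $y$ as a supersolution of the reaction-free equation $z'=h'(w)R(z)$, because that Cauchy problem from $z(0)=0$ has non-unique solutions and a supersolution only dominates its \emph{minimal} solution, which is $z\equiv 0$. The correct reading --- and the one your references actually support --- is the reverse: $z$ is a subsolution of the forward problem \eqref{cauchyforward}, whose solution is unique for $f\in\mathcal{C}$ and $c>0$ (monotonicity of $R$, \cite[Theorem 3.9]{BonSan}), and a subsolution lies below the maximal, hence the unique, solution $y$. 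Alternatively, you can avoid comparison theorems altogether: on the set where $0<y<1$ and $f\le 0$ one has $\frac{d}{dw}\sqrt{y(2-y)} = (c+h'(w)) - f(w)\,\frac{1-y}{\sqrt{y(2-y)}} \ge h'(w)$, and integrating from $\delta$ to $w$ and letting $\delta\to 0^+$ gives $\sqrt{y(w)(2-y(w))}\ge h(w)-h(0)$, which yields the same contradiction at the point $w^*$ where $h(w^*)-h(0)=1$.
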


\begin{proof}
We just sketch the argument (which is similar to the one in \cite[Proposition 3.9]{GarSan}). On one hand, if
$\eqref{necessf}_{i}$ is violated, it follows that $\int_0^1
f^+(u) \, du \leq \int_0^1 f^-(u) \, du$, namely
\begin{equation*}
y^+_{0,f} (u_0) \geq- \int_0^{u_0} f(u) \, du \geq y^-_{0,f} (u_0),
\end{equation*}
and the nonexistence of a positive admissible speed follows from the usual monotonicity argument (see Lemma \ref{lemma1}). 
\\
On the other hand, if  $\eqref{necessf}_{ii}$ is not true, we have that $z(u)=- \int_0^u f(s) \, ds$ is a lower solution from the left for 
\begin{equation*}
y'= (c+h'(u)) \frac{\sqrt{y(2-y)}}{1-y}-f(u)  \geq -f(u), \quad y(0)=0
\end{equation*}
(recall that $h'(u) > 0$).
Thus, for $c > 0$ we have that $y^+_{c,f}(u) > z(u)$ as long as $z(u)<1$, meaning that $y^+_{c,f}(u)$ reaches the value $1$ in finite time. Hence, no positive
admissible speeds exists.
\newline
Finally, the necessity of the condition on $h'$ comes from the above discussion, comparing with the case $f \in \mathcal{B}$.
\end{proof}

We explicitly notice that the necessary condition \eqref{necessf}, together with \eqref{stimah}, is not
sufficient for the existence of a positive admissible speed, as it instead was for $h' \equiv 0$ (compare with \cite{GarSan}). Actually, the forward solution
$y_{0, f}^+(u)$ may blow-up even if \eqref{stimah}
and \eqref{necessf} are satisfied. 
However, if $h'(u)$ has a very slow growth, we can recover the sufficiency of \eqref{necessf} without further
assumptions, as shown in the following (nonoptimal) statement. 

\begin{proposition}\label{possibileC}
Let $h$ fulfill \eqref{stimah} and $f \in \mathcal{C}$ satisfy \eqref{necessf}. Fix $\eta$ such that  
$$
0 < \eta < \min\{F^+, 1\}-F^-.
$$
Then, sufficient conditions for the existence of a \emph{positive}
admissible speed for problem $(\star)$ (with $d(u) \equiv 1$) are the following:
\begin{equation}\label{accaC1}
h(u_0)-h(0) < \displaystyle \frac{(1-F^- - \eta)}{\sqrt{(F^- + \eta)(2-F^- -\eta)}}\, \eta 
\end{equation}
and
\begin{itemize}
 \item[1)] if $F^+ < 1$, 
\begin{equation}\label{accaC2}
\displaystyle h(1)-h(u_0) < \frac{(1-F^+)}{\sqrt{F^+(2-F^+)}}\, (F^+-F^- -\eta) \,;  \vspace{0.2 cm} 
\end{equation}
 \item[2)] if $F^+ \geq 1$, fixed other constants  $0 < \zeta < 1-F^--\eta$ and  $\xi_0 \in (u_0, 1)$
such that $\int_{\xi_0}^1 f(u) \, du = 1-\zeta$, 
\begin{equation}\label{accaC3}
\left\{
\begin{array}{l}
\displaystyle h(\xi_0)-h(u_0) < \sqrt{1-\Big(\frac{1+\zeta-F^--\eta}{2}\Big)^2}-\sqrt{1-(1-F^- -\eta)^2} \vspace{0.2 cm} \\
\displaystyle h(1)-h(\xi_0) < \frac{1-\zeta-F^--\eta}{2} \frac{\zeta}{\sqrt{1-\zeta^2}} \,.
\end{array}
\right.
\end{equation}
\end{itemize}
\end{proposition}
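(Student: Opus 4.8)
The plan is to reduce the claim, exactly as for Proposition \ref{possibileB}, to the single separation
$$
y_{0,f}^+(u_0) < F^- + \eta < y_{0,f}^-(u_0)
$$
at $c=0$ (recall $d \equiv 1$, so $D_1=1$ and $D(u_0)=u_0$), which in particular gives \eqref{quipos}. Once this is in force, Lemma \ref{lemma1} makes $c \mapsto y_{c,f}^+(u_0)$ strictly increasing and $c \mapsto y_{c,f}^-(u_0)$ strictly decreasing as long as the latter stays positive, so that $\Delta(c):=y_{c,f}^-(u_0)-y_{c,f}^+(u_0)$ is strictly decreasing; since $\Delta(0)>0$ while for $c$ large the forward solution reaches the singularity of $R$ before $u_0$, continuity and continuous dependence on $c$ produce a unique $c^*>0$ with $y_{c^*,f}^+(u_0)=y_{c^*,f}^-(u_0)\in(0,1)$, whence by uniqueness the two solutions coincide and yield the sought $\{0,1\}$-connection. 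Uniqueness of the positive speed has already been recorded right after Lemma \ref{lemma1}, so it only remains to prove the displayed separation; this is where the three groups of hypotheses enter, and I treat the two factors separately.

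For the forward solution I would control the growth caused by the negative part of $f$ on $(0,u_0)$, which is the delicate point of the whole argument and uses only \eqref{accaC1} (hence is the same in both cases). There the equation at $c=0$ reads $(y_{0,f}^+)'=h'(u)R(y_{0,f}^+)+f^-(u)$, and I propose the explicit barrier
$$
\psi(u)=\int_0^u f^-(s)\,ds+(h(u)-h(0))\,R(F^-+\eta),
$$
so that $\psi(0)=0$ and $\psi(u_0)=F^-+(h(u_0)-h(0))R(F^-+\eta)<F^-+\eta$ precisely by \eqref{accaC1} (which reads $h(u_0)-h(0)<\eta/R(F^-+\eta)$). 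Since $\psi$ is increasing it stays below $F^-+\eta$ on $[0,u_0]$, whence $R(\psi)\le R(F^-+\eta)$ and $\psi'=f^-+h'R(F^-+\eta)\ge f^-+h'R(\psi)$, so $\psi$ is a supersolution; comparison gives $y_{0,f}^+(u)\le\psi(u)$, hence $y_{0,f}^+(u_0)<F^-+\eta$ and no blow-up before $u_0$. The density-dependent case only inserts $d(u)$ under the integral, as already observed for Proposition \ref{possibileB}.

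For the backward solution I would bound $y_{0,f}^-(u_0)$ from below, distinguishing the two regimes. On $(u_0,1)$, where $f=f^+>0$, the map $\bar y(u)=\int_u^1 f(s)\,ds$ is a supersolution (the verification is identical to the one in the proof of Proposition \ref{possibileB}), so $y_{0,f}^-\le\bar y$ and $R(y_{0,f}^-)$ is controlled by monotonicity of $R$. In case $1$ ($F^+<1$) one has $\bar y\le F^+<1$, hence
$$
y_{0,f}^-(u_0)=F^+-\int_{u_0}^1 h'(u)R(y_{0,f}^-)\,du>F^+-(h(1)-h(u_0))R(F^+),
$$
and \eqref{accaC2} forces the right-hand side above $F^-+\eta$. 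In case $2$ ($F^+\ge1$) the bound $\bar y\le F^+$ is useless near $u_0$, so I split at $\xi_0$: on $(\xi_0,1)$, where $\int_{\xi_0}^1 f=1-\zeta<1$, the same supersolution argument together with $\eqref{accaC3}_{ii}$ gives $y_{0,f}^-(\xi_0)>1-\tfrac{1+\zeta-F^--\eta}{2}$; then on $[u_0,\xi_0]$ the solution of $z'=h'(u)R(z)$ with $z(\xi_0)=1-\tfrac{1+\zeta-F^--\eta}{2}$, namely
$$
z(u)=1-\sqrt{1-\big(h(u)-h(\xi_0)+\sqrt{1-(1+\zeta-F^--\eta)^2/4}\,\big)^2},
$$
is a subsolution for $y_{0,f}^-$ (the comparison is the one from \cite[Proposition 3.9]{GarSan}: at a first backward contact point $R(y_{0,f}^-)=R(z)$ forces the wrong sign of the difference through the $-f<0$ term), and $\eqref{accaC3}_i$ yields $z(u_0)>F^-+\eta$.

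In either case one obtains $y_{0,f}^+(u_0)<F^-+\eta<y_{0,f}^-(u_0)$, i.e. \eqref{quipos}, and the monotonicity/continuity argument of the first paragraph closes the proof. The main obstacle is exactly the forward estimate on $(0,u_0)$: since $f<0$ there pushes $y_{0,f}^+$ upward, one must rule out that it reaches the singularity of $R$ at $1$ before $u_0$, and it is the barrier $\psi$ together with the sharp threshold $F^-+\eta$ (tuned to \eqref{accaC1}) that prevents this; a secondary difficulty is the two-step backward estimate in case $2$, where the naive supersolution $\bar y$ degenerates near $u_0$ and must be replaced by the stitched sub/supersolution pair anchored at $\xi_0$.
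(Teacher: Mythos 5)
Your proposal is correct and follows essentially the same route as the paper's proof: the reduction to the separation $y_{0,f}^+(u_0) < F^- + \eta < y_{0,f}^-(u_0)$ at $c=0$, the case-1 estimate via the supersolution $\bar y(u)=\int_u^1 f(s)\,ds$ with \eqref{accaC2}, the case-2 split at $\xi_0$ with the explicit subsolution $z$ and \eqref{accaC3}, and the closing monotonicity/continuity argument in $c$ are all exactly the paper's steps. Your forward barrier $\psi(u)=\int_0^u f^-(s)\,ds+(h(u)-h(0))R(F^-+\eta)$ is just a repackaging of the paper's contradiction argument at the first point where $y_{0,f}^+$ would reach $F^-+\eta$; the underlying inequality driven by \eqref{accaC1} is identical.
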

\begin{proof}
The idea is to proceed similarly as in Proposition \ref{possibileB} in order to ensure that $y_{0, f}^+(u_0) < y_{0,
f}^-(u_0)$ (noticing preliminarily
that \eqref{stimah} is automatically satisfied in view of \eqref{accaC1}).  
We preliminarily observe that $\eqref{accaC1}$ immediately gives
\begin{equation}\label{tesi1}
y_{0, f}^+(u_0) < F^- + \eta.  
\end{equation}
Indeed, if we assume that there exists $u^* \in (0, u_0)$ such that $y_{0, f}^+(u^*)=F^- + \eta$ (we recall that this last quantity is
strictly below $1$ in view of the choice of $\eta$), with $0 < y_{0,
f}^+(u) < F^- + \eta$ for every $u \in (0, u^*)$, by the monotonicity of $R(y)$ we infer that 
\begin{eqnarray*}
y_{0, f}^+(u^*) && = \int_0^{u^*} h'(u) \frac{\sqrt{(y_{0, f}^+(u))(2-y_{0, f}^+(u))}}{1-y_{0, f}^+(u)} \, du - \int_0^{u^* }f(u) \, du  \\
&& < (h(u_0)-h(0)) \frac{\sqrt{(F^- + \eta)(2-F^- - \eta)}}{1-F^- - \eta} + F^-,
\end{eqnarray*}
yielding, in view of $\eqref{accaC1}_i$, the contradiction 
$
y_{0, f}^+(u^*) < F^- + \eta. 
$
\newline
We now focus on $y_{0, f}^-$. In case 1), we have $\min\{F^+, 1\} = F^+$ and 
one can proceed similarly as in Proposition \ref{possibileB}, exploiting the fact that $\bar{y}(u)=\int_u^1 f(s) \,
ds$ is a supersolution for
\eqref{cauchyback}. In view of the monotonicity of $R(y)$, it follows that 
\begin{eqnarray*}
y_{0, f}^-(u_0) && = \int_{u_0}^1 f(u) \, du - \int_{u_0}^1 h'(u) \frac{\sqrt{(y_{0, f}^-(u))(2-y_{0, f}^-(u))}}{1-y_{0, f}^-(u)} \, du \\
&& > F^+ - (h(1)-h(u_0)) \frac{\sqrt{F^+(2-F^+)}}{1-F^+}, 
\end{eqnarray*}
so that $y_{0, f}^-(u_0) > F^- + \eta$ in view of $\eqref{accaC1}_{ii}$ and \eqref{quipos} is fulfilled. 
\newline 
In case 2), we exploit again the same reasoning as in Proposition \ref{possibileB}, inferring first that 
$$
y_{0, f}^-(\xi_0) > 1-\frac{1+\zeta-F^--\eta}{2}. 
$$
Then, the backward solution to 
$$
z'=h'(u) \frac{\sqrt{z(2-z)}}{1-z}, \qquad z(\xi_0)=1-\frac{1+\zeta-F^--\eta}{2}
$$
is again a subsolution for $y_{0, f}^-(u)$ in the interval $[u_0, \xi_0]$. 
Since, for every $w \in (u_0, \xi_0)$, 
$$
z(u)=1-\sqrt{1-(h(u)-h(\xi_0)+\sqrt{1-(1+\zeta-F^--\eta)^2/4})^2},
$$
in view of $\eqref{accaC3}_{i}$ we obtain that $z(u_0) > F^- + \eta$, so that 
$
y_{0, f}^-(u_0) > F^- +\eta
$ 
and the conclusion follows as before.
\end{proof}

\begin{remark}
\textnormal{Similarly as in the case $f \in \mathcal{B}$, if $h'(u) \equiv 0$ then all the assumptions on $h(u)$ are trivially satisfied,
so that Proposition \ref{possibileC}
may be easily adapted to generalize \cite[Proposition 3.9]{GarSan} to the case of density-dependent diffusions. We also observe that
we
may have taken into account more general reaction terms, as in
\cite[Proposition 3.11]{GarSan}, obtaining similar pictures. We omit further details for briefness.}
\end{remark}

In Figures \ref{figC1} and \ref{figC2}, similarly as before, we depict two possible applications of Proposition \ref{possibileC}.

\begin{figure}[!h]
\centering
\includegraphics[width=11cm,height=6cm]{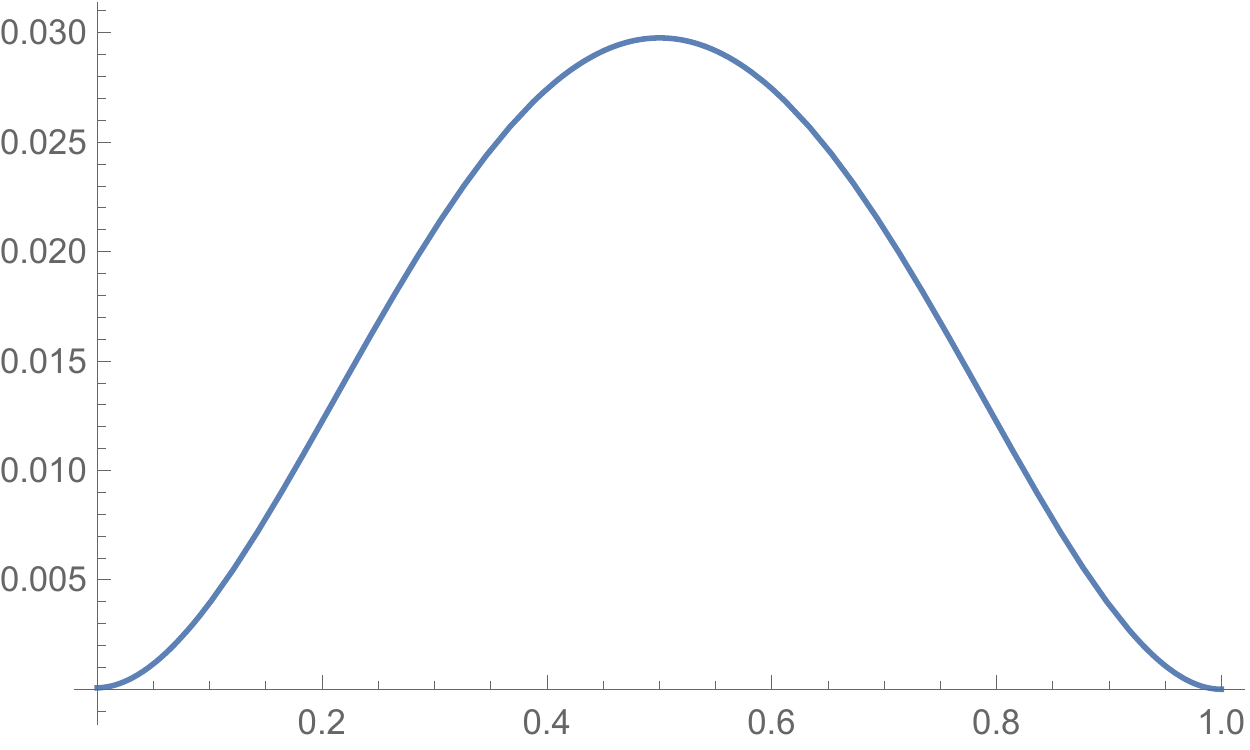}
\caption{\small{As an application of Proposition \ref{possibileC}, we depict the
solution to
$(\star)$ for $h(u)=0.05 u^2$ and $f(u)=2u(1-u)(u-0.4)$: since $F^+ \approx 0.05$ and $F^- \approx 0.017$, we can choose $\eta = 0.015$ to
find the two bounds in $\eqref{accaC1}$ and $\eqref{accaC2}$ approximately equal to $0.05$, so that
$h(0.5)-h(0)=0.0125$ and $h(1)-h(0.5)=0.0375$ enter such conditions. Thus, a positive admissible speed exists; numerically, we find
$c^* \approx 0.151$.}}
\label{figC1}
\end{figure}

\begin{figure}[!h]
\centering
\includegraphics[width=11cm,height=6cm]{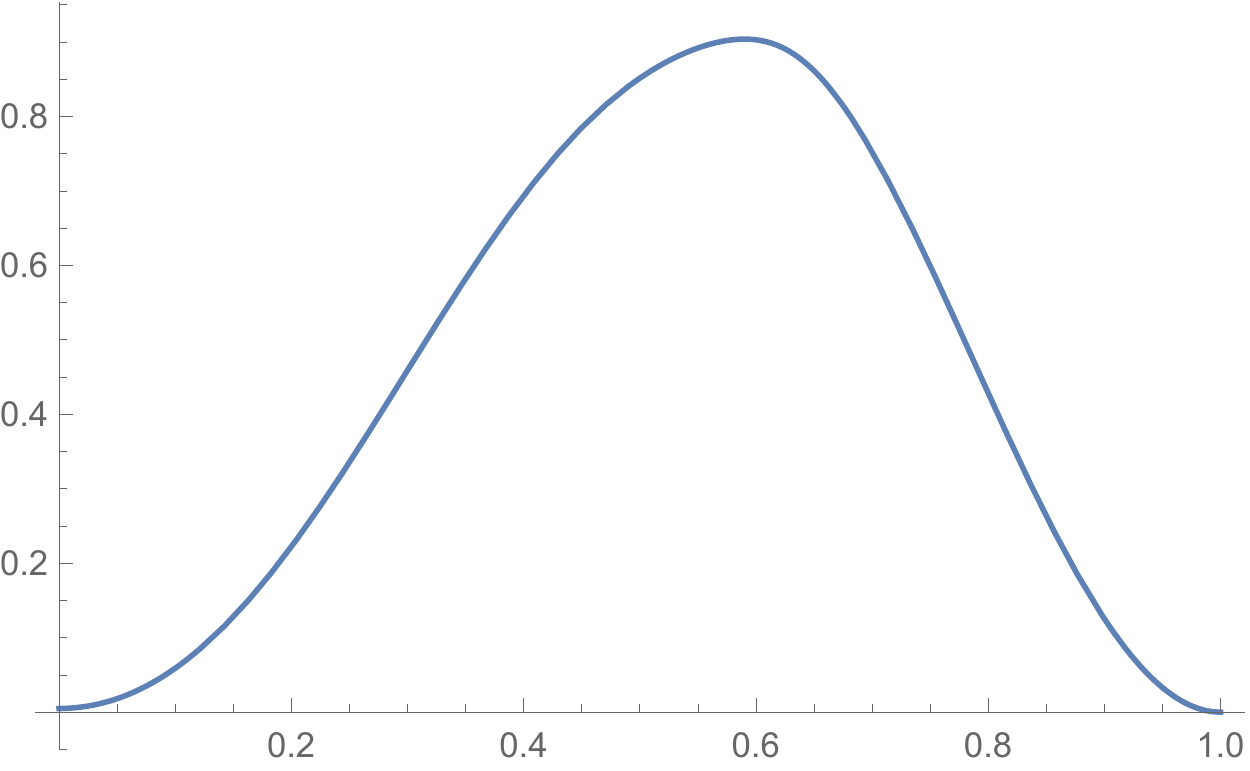}
\caption{\small{Here, maintaining the same convective term, the reaction is instead given by $f(u)=\frac{3}{2} \sin(2\pi u-\pi)(1+2u)$, so that $F^+ > 1$. Similarly as before, we
can check the validity of the assumptions of Proposition
\ref{possibileC}, case 2); numerically, we find $c^* \approx 0.1105$.}}
\label{figC2}
\end{figure}

\section{Negative admissible speeds and nonexistence}\label{sez5}

In this section, we aim at showing some situations when there may be a unique negative admissible speed or even no admissible speeds, when
either $f \in \mathcal{B}$ or $f \in \mathcal{C}$; as before, we will assume that the convection term satisfies assumption $\mathbf{(H')}$. 
We start by observing that, both in the case $f \in \mathcal{B}$ and $f \in \mathcal{C}$, the forward Cauchy problem \eqref{cauchyforward} (i.e., the problem with initial condition $y(0)=0$) has a unique positive solution $y_{c, f}^+$. 
If $f \in \mathcal{B}$, this follows again by integrating the equation, obtaining formula \eqref{inizio}. If $f \in \mathcal{C}$, on the other hand, the solution to \eqref{cauchyforward} itself is unique and positive in a neighborhood of $0$ as a consequence of the monotonicity of $R(y)$, regardless of the sign of $c$.  We have already recalled this fact \cite{BonSan} for $c+h'(0) > 0$ (actually, the same motivation works for $c+h'(0)=0$); if $c+h'(0) < 0$, setting $z(w)=y(D_1-w)$ we obtain the equivalent backward Cauchy problem
\begin{equation}\label{dopocambio}
z'=-\frac{(c+\hat{h'}(D_1-w))}{\hat{d}(D_1-w)}R(z) - \hat{f}(D_1-w), \quad z(D_1)=0,
\end{equation}
which has a unique solution as $R(z)$ is increasing and $-(c+\hat{h'}(D_1-w))/(\hat{d}(D_1-w)) > 0$ (in a left neighborhood of $D_1$). Consequently, $y(w)$ is indeed unique. 
\\
This allows to deduce some monotonicity properties (see again \cite{Har}) of $y_{c, f}^+$: since $z(w)$ decreases on increasing of $c$, $y_{c, f}^+(w)$ increases with $c$ also if $c < 0$. 
In particular, $y_{c, f}^+(D(u_0))$ depends monotonically on $c$ (notice indeed that in this case $y_{c, f}^+(w)$ cannot vanish for $w < D(u_0)$, in view of the sign of its derivative, so that $y_{c, f}^+(D(u_0))$ is well defined). On the other hand, the backward solution $y_{c, f}^-$ is always unique and monotone with respect to $c$ (but may possibly blow-up). 
\\
We can thus make a first observation based on a comparison with
the problem without convection 
\begin{equation}\label{convzero}
\left\{
\begin{array}{l}
\displaystyle x'= \frac{c}{\hat{d}(w)} \frac{\sqrt{x(2-x)}}{1-x}-\hat{f}(w)  \vspace{0.1 cm} \\
x(0)=x(D_1)=0, 
\end{array}
\right.
\end{equation}
assuming the validity of condition \eqref{necessf}. We recall that problem \eqref{convzero} possesses a unique positive admissible
speed $c_f^*$, as it is possible to see proceeding as in the proof of \cite[Proposition 3.9]{GarSan}, with very minor changes due to the
presence of $\hat{d}(w)$. 
\newline 
The following proposition relates $c_f^*$ with the admissible speeds (if any) for problem $(\star)$ through a
first estimate. We denote by $x_{c, f}^+$ (resp. $x_{c, f}^-$) the solution to the forward (resp. backward) Cauchy
problem associated with the differential equation in \eqref{convzero}, with initial condition $x(0)=0$ (resp. $x(D_1)=0$). 
\begin{proposition}
Let $f \in \mathcal{B}$ or $f \in \mathcal{C}$ satisfy \eqref{necessf} and let $h$ fulfill hypothesis $\mathbf{(H')}$. If $c \in \mathbb{R}$
is an admissible speed
for problem $(\star)$,
then it has to hold
\begin{equation}\label{stimarozza}
c_f^* - \max_{u \in [0,1]} h'(u) \leq c \leq c_f^* - \min_{u \in [0,1]} h'(u).
\end{equation}
\end{proposition}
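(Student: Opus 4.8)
The plan is to compare problem $(\star)$ with the convection--free problem \eqref{convzero}, whose unique positive admissible speed is $c_f^*$, by regarding $h'$ as a (variable) perturbation of the speed. The starting observation is the pointwise bound
$$
\min_{u \in [0,1]} h'(u) \,\le\, \hat{h'}(w) \,\le\, \max_{u \in [0,1]} h'(u), \qquad w \in [0, D_1],
$$
which sandwiches the variable effective speed $c + \hat{h'}(w)$ appearing in the coefficient of $R(y)$ in $(\star)$ between the two \emph{constant} speeds $c + \min h'$ and $c + \max h'$. Since $c$ is admissible for $(\star)$, the forward and backward solutions coincide, so in particular $y_{c,f}^+(D(u_0)) = y_{c,f}^-(D(u_0))$.

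Next I would set up the comparison. Denote by $g(w, y)$ the right--hand side of $(\star)$ and, for a constant $\tilde c$, by $\tilde g_{\tilde c}(w, x) = \frac{\tilde c}{\hat d(w)} R(x) - \hat f(w)$ the right--hand side of \eqref{convzero} with speed $\tilde c$. Because $R(\cdot) > 0$ and $\hat d > 0$, the choice $\tilde c = c + \max_u h'(u)$ gives $\tilde g_{\tilde c} \ge g$ pointwise, whereas $\tilde c = c + \min_u h'(u)$ gives $\tilde g_{\tilde c} \le g$. Applying the standard monotone dependence for scalar ODEs (\cite[Exercise 4.1]{Har}) forward from $w = 0$ and backward from $w = D_1$ — where the two families share the datum $0$ — I obtain, for $\tilde c = c + \max h'$,
$$
y_{c,f}^+(D(u_0)) \le x_{\tilde c,f}^+(D(u_0)), \qquad x_{\tilde c,f}^-(D(u_0)) \le y_{c,f}^-(D(u_0)),
$$
together with the reversed inequalities for $\tilde c = c + \min h'$.

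Finally I would conclude using the threshold characterization of $c_f^*$. Since $x_{c,f}^+$ increases and $x_{c,f}^-$ decreases with the speed (the same monotonicity recalled in Lemma \ref{lemma1}) and they coincide at $D(u_0)$ exactly for the speed $c_f^*$, one has, for every constant $\tilde c$,
$$
x_{\tilde c,f}^+(D(u_0)) \ge x_{\tilde c,f}^-(D(u_0)) \iff \tilde c \ge c_f^*,
$$
with a forward blow--up before $D(u_0)$ counted as an overshoot. Combining this with the comparison and the admissibility equality $y_{c,f}^+(D(u_0)) = y_{c,f}^-(D(u_0))$: the choice $\tilde c = c + \max h'$ forces $x_{\tilde c,f}^+(D(u_0)) \ge x_{\tilde c,f}^-(D(u_0))$, whence $c + \max h' \ge c_f^*$, that is the left inequality in \eqref{stimarozza}; symmetrically $\tilde c = c + \min h'$ forces $x_{\tilde c,f}^+(D(u_0)) \le x_{\tilde c,f}^-(D(u_0))$, whence $c + \min h' \le c_f^*$, the right inequality.

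The main obstacle will be making the comparison fully rigorous in the presence of the singularity of $R$ at $y = 1$: one must ensure that the inequalities are preserved up to $D(u_0)$ even when one of the compared solutions blows up, and that the backward comparison is correctly reversed under time reversal. A minor additional point is checking the threshold characterization of $c_f^*$ for non--positive constant speeds (for $f \in \mathcal B$ and a non--positive speed the forward solution from $0$ is identically zero), so that the equivalence above indeed holds for all $\tilde c \in \mathbb{R}$.
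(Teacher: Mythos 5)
Your proposal is correct and takes essentially the same route as the paper: both arguments sandwich the variable coefficient $c+\hat{h'}(w)$ between the constant speeds $c+\min_{u\in[0,1]}h'(u)$ and $c+\max_{u\in[0,1]}h'(u)$, compare problem $(\star)$ with the convection-free problem \eqref{convzero} via the monotone dependence of \cite[Exercise 4.1]{Har}, and conclude using the threshold property of $c_f^*$ (forward versus backward solution at $D(u_0)$). The only difference is presentational: the paper argues by contradiction, assuming $c$ lies strictly outside the interval \eqref{stimarozza} with a margin $\sigma>0$, whereas you argue directly from the admissibility equality $y_{c,f}^+(D(u_0))=y_{c,f}^-(D(u_0))$; the two are contrapositives of one another.
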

\begin{proof}
In view of the sign of $y'$ and the monotonicity properties of the solutions with respect to $c$, we immediately have that $c$ can be admissible only if $c+\max_{u \in [0, 1]} h'(u) > 0$; moreover, it has to be $c<c_f^*$. Now, if there exists $\sigma > 0$ such that $c < c_f^*-\max_{u \in [0,
1]} h'(u) -\sigma$, then the solution $\bar{y}(w)$ to 
$$
y'=\frac{c+\hat{h}'(w)}{d(w)} \frac{\sqrt{y(2-y)}}{1-y}-\hat{f}(w), \quad y(0)=0
$$
is a subsolution for the forward Cauchy problem 
$$
\left\{
\begin{array}{l}
\displaystyle x'= \frac{c_f^*-\sigma}{\hat{d}(w)} \frac{\sqrt{x(2-x)}}{1-x}-\hat{f}(w)  \vspace{0.1 cm} \\
x(0)=0.
\end{array}
\right.
$$
Since for $c < c_f^*$ we have $x_{c, f}^+(D(u_0)) < x_{c, f}^-(D(u_0))$, by monotonicity the appearance of a $\{0, D_1\}$-connection is then
impossible. \newline 
Analogously, if $c > c_f^*-\min_{u \in [0, 1]} h'(u) + \sigma$, then $y_{c, f}^+(w) > x_{c, f}^+(w)$ and we have $x_{c, f}^+(D(u_0)) >
x_{c,
f}^-(D(u_0))$, so that a $\{0, D_1\}$-connection is again impossible.  
\end{proof}
The control \eqref{stimarozza} gives an effective estimate on the admissible speed for $(\star)$ only once this is already
known to exist and the value of $c_f^*$ is known; however, since it can be easily shown (cf. \cite{HadRot}) that $c_f^*$ is subject to the bound 
$
-2\sqrt{d(u_0) f'(u_0)} < c^*_f <  2\sqrt{d(u_0) f'(u_0)},
$
estimate \eqref{stimarozza} implies, for instance, that 
\begin{equation}\label{stimaconfisher}
-2\sqrt{d_0 f'(u_0)} - \max_{u \in [0,1]} h'(u) \leq c \leq 2\sqrt{d_1 f'(u_0)} - \min_{u \in [0,1]} h'(u).
\end{equation}
Thus, we have a further evidence that there may not be positive admissible speeds if the convection
is too large.  
\smallbreak
In the following, we aim at describing some possible situations we could encounter; since a general statement appears quite delicate to be
given, we limit ourselves to some ``heuristic" descriptions and numerical simulations.

\subsection{Reaction terms of type B} \label{sezneg}

As remarked before, if  $h'(0)=0$
then the admissible speed (if any)
has to be positive, since for $c < 0$ the forward solution $y_{c, f}^+(w)$ is necessarily the zero one 
(for the same reason, any $c \leq -h'(u_0)$ cannot be admissible).
Thus, in this framework nonexistence holds every time that \eqref{quipos} fails;
in Figure \ref{figB0} we illustrate this situation. 
\begin{figure}[!h]
\centering
\includegraphics[width=11cm,height=6cm]{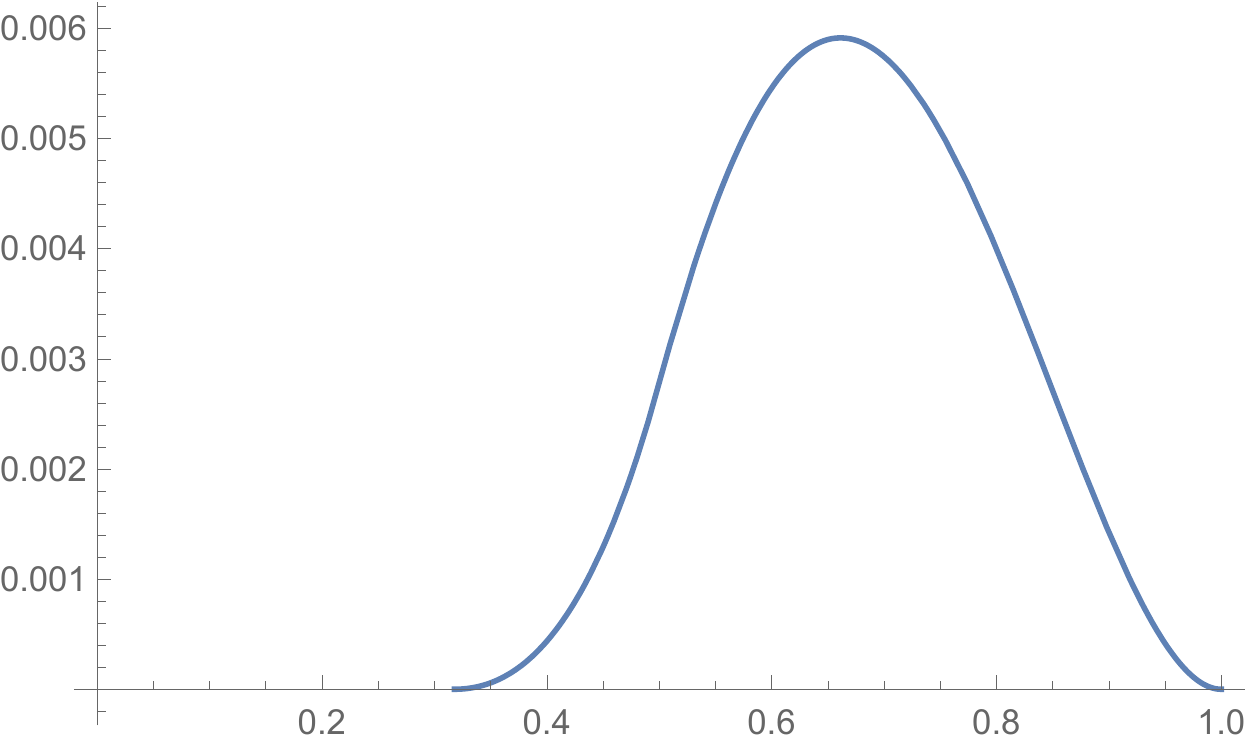}
\caption{\small{We depict the backward solution $y_{0, f}^-$ to the differential equation in $(\star)$, for $c=0$, $h(u)=0.5 u^2$ and
$f(u)=2u(1-u)(\max\{0.5, u\}-0.5)$. Here the convection term is already too strong and makes \eqref{quipos} fail: actually, while the
necessary condition \eqref{stimah} is satisfied, $y_{0, f}^-$ vanishes before reaching
$0$ (indeed, notice that the hypotheses of Proposition \ref{possibileB} cannot be fulfilled for any choice of $\eta$).
Since, on the other hand, negative admissible speeds are prohibited by the fact that $h'(0)=0$, here no admissible speeds exist.}}
\label{figB0}
\end{figure}
\noindent
\newline
Now, assuming that $h'(0) > 0$, there are
essentially two
situations in which the admissible speed may  be
negative:
\begin{itemize} 
 \item[-] \eqref{stimah} holds but \eqref{quipos} fails, so that an
intersection between $y_{c, f}^+$ and $y_{c, f}^-$ may be guaranteed only for a negative value of $c$;
 \item[-]  \eqref{stimah} does not hold (so that the forward solution $y_{0,f}^+(w)$
is not defined in $w=D(u_0)$), but, since
\begin{equation*}
y^+_{c,f}(w):= 1- \sqrt{1-(cD^{-1}(w)+ h(D^{-1}(w))-h(0))^2},
\end{equation*}
there exists a suitable \emph{negative} $\bar c$ such that 
\begin{equation}\label{negativastima}
c u_0 + h(u_0) < 1+h(0), \quad {\rm for \ every \ } c \leq \bar c,
\end{equation}
so that $y_{\bar{c}, f}^+(D(u_0))$ exists and is less than $1$. 
\end{itemize}
The obstacle to the existence of an admissible speed, in both cases, may be the possibility that
$y_{\bar{c}, f}^-(w)$ is not defined for $w \in [D(u_0), 1]$, since $y_{c, f}^-(w)$ has already blown up to $1$
for some $c \in [\bar{c}, 0]$ (since $\bar{c}$ is negative, the blow-up is here possible). 
\newline
In Figures \ref{figB4} and \ref{figB5}, we depict two situations of existence in presence of a convex convective term,  
referring the reader to the considerations in the captions therein. In Figure \ref{figB7}, we present a numerical simulation for a
non-convex convective term, while in Figure \ref{figB8} we show a nonexistence situation.
\begin{figure}[!h]
\centering
\includegraphics[width=11cm,height=6cm]{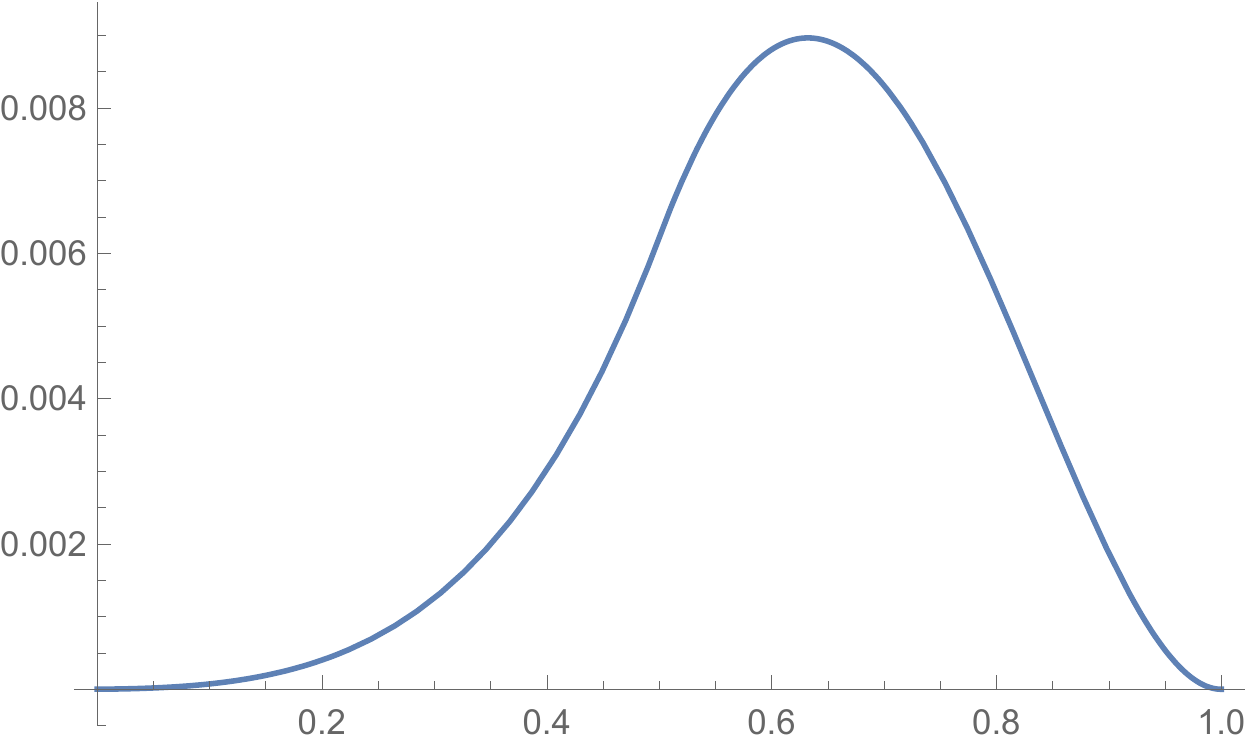}
\caption{\small{In this situation, $h(u)=2\textrm{e}^{t/2}$ and $f(u)=2u(1-u)(\max(t, 0.5)-0.5)$; since $h'(0)> 0$ and
\eqref{stimah} is satisfied, here we expect the existence of a unique negative admissible speed $c^*$ with $c^*+h'(0) >
0$. The numerical simulations show that $c^* \approx -0.9157$.}}
\label{figB4}
\end{figure}

\begin{figure}[!h]
\centering
\includegraphics[width=11cm,height=6cm]{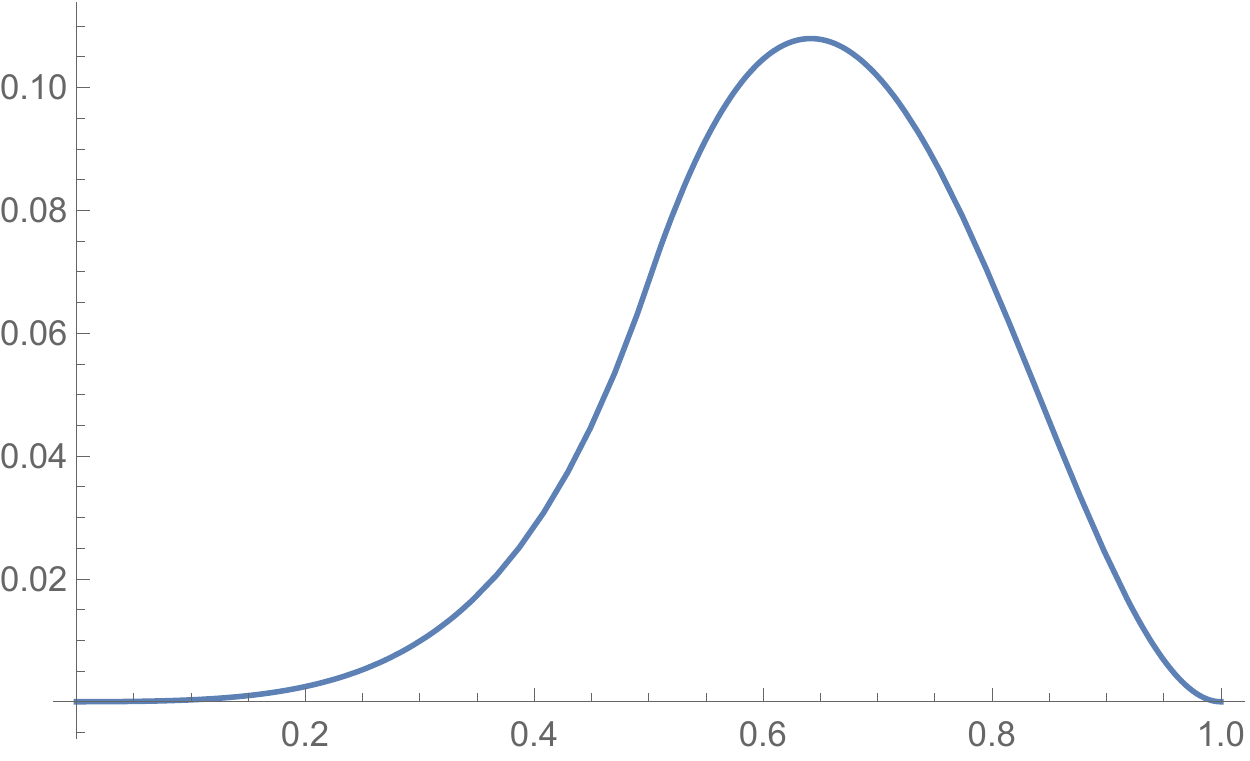}
\caption{\small{Here we represent the second situation described above, namely the convex function $h(u)$ does not
fulfill \eqref{stimah} but a negative admissible speed exists. This is the case for the choices
$f(u)=30u(1-u)(\max\{0.5, u\}-0.5)$ and $h(u)=2\textrm{e}^t$, with corresponding admissible speed $c^* \approx -1.8708$.}}
\label{figB5}
\end{figure}


\begin{figure}[!h]
\centering
\includegraphics[width=11cm,height=6cm]{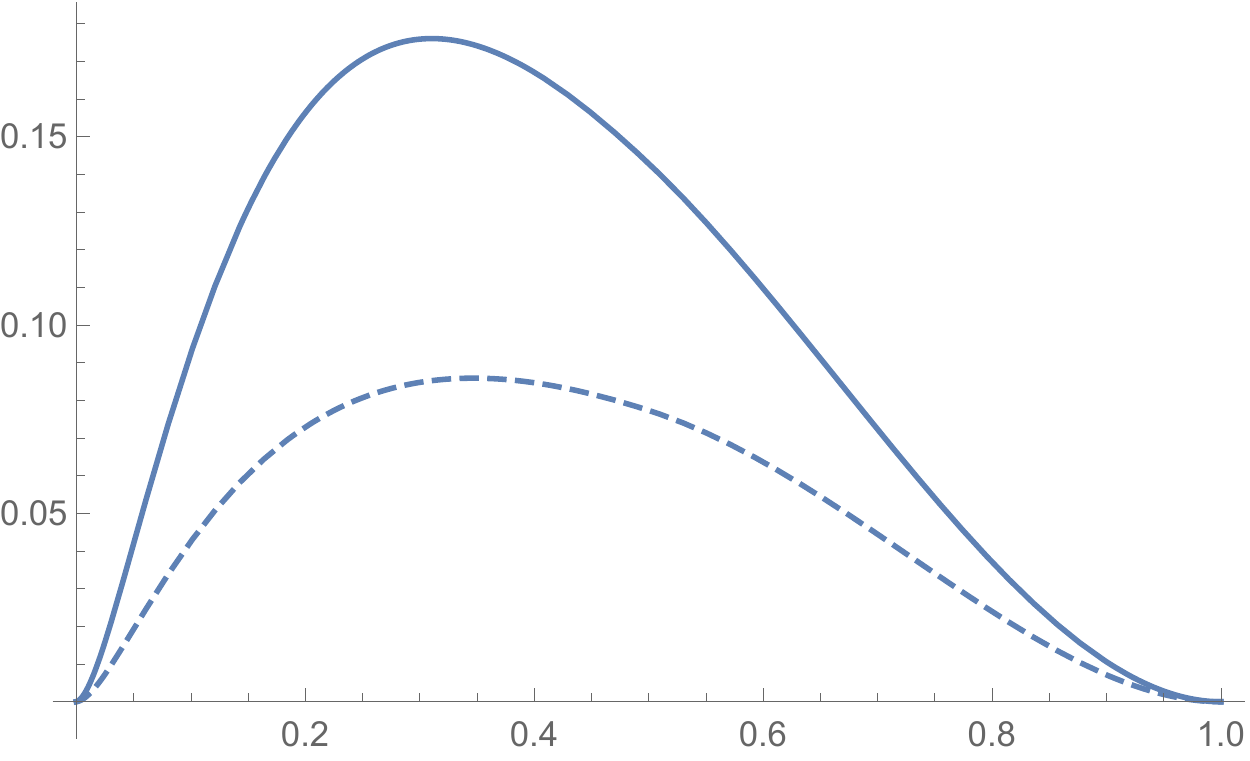}
\caption{\small{In this situation, $f \in \mathcal{B}$ is given by
$f(u)=2u(1-u)(\max\{0.5, u\}-0.5)$ and we have a non-convex convective term given, respectively, by $h(u)=\frac{1}{3}\log(u+0.05)$ (dashed curve) and $h(u)=\frac{1}{2}\log(u+0.05)$
(non-dashed curve). In the first case, condition \eqref{stimah} holds true but \eqref{quipos} fails and we  find $c \approx -0.8399$.
In the second, \eqref{stimah} is not fulfilled, but we are  able to find the desired \{0,1\}-connection for $c^* \approx -1.3866$. The
possibility of having a negative admissible speed strictly relies on the fact that $h'(0) >
0$. }}
\label{figB7}
\end{figure}

\begin{figure}[!h]
\centering
\includegraphics[width=11cm,height=6cm]{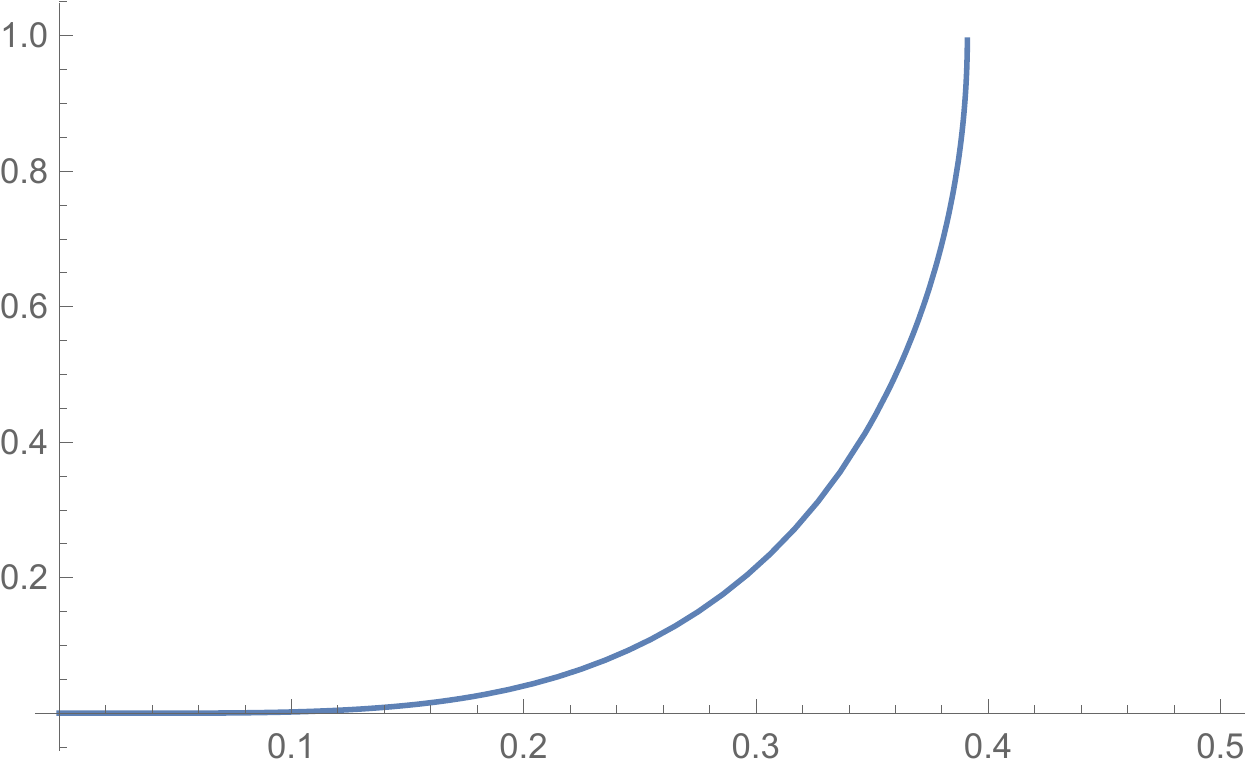}
\vspace{0.1 cm}
\includegraphics[width=11cm,height=6cm]{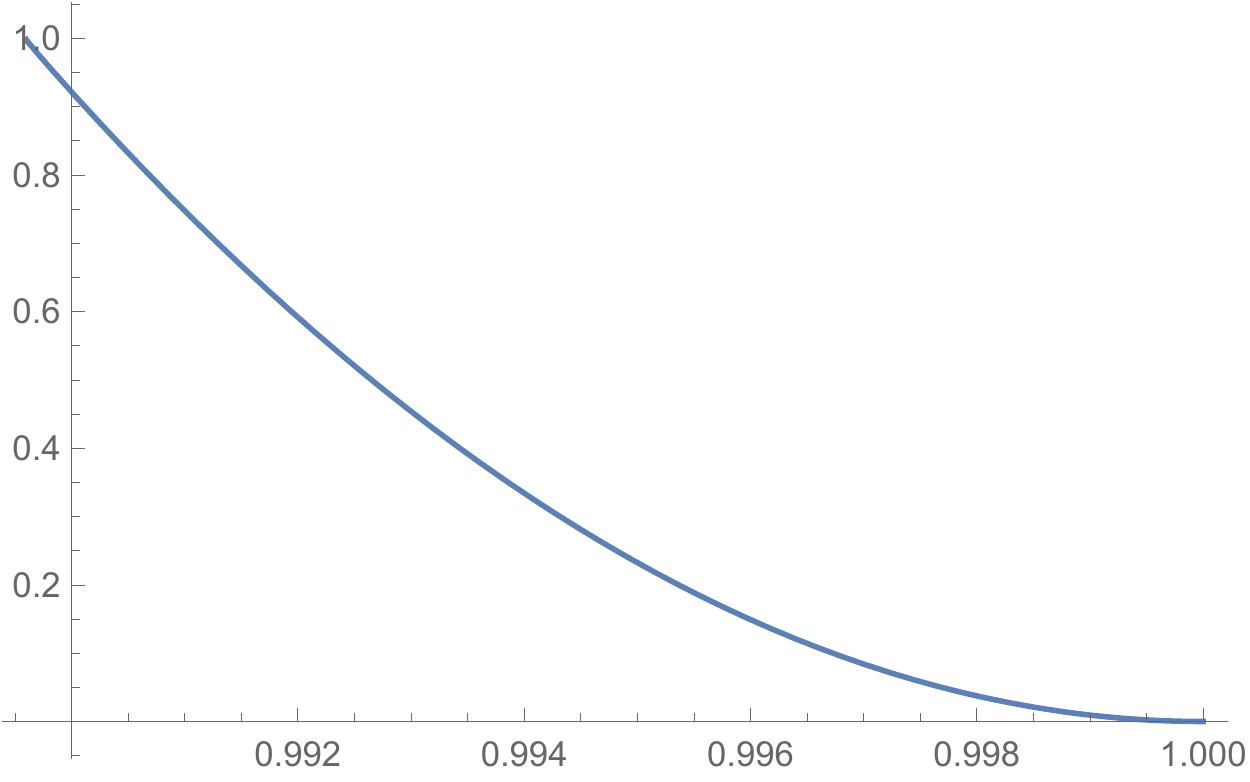}
\caption{\small{Here $c=-1/3$, $h(u)=10 u^2-(20/3)u^3$ and $f(u)$ is equal to zero for $u \in [0, 0.5]$ and equal to $37500 u(1-u)(u-0.5)$ for $u \in [0.5, 1]$. We depict the two solutions $y_{c, f}^+$ (above) and $y_{c, f}^-$ (below), which both blow up to $1$. In view of the monotonicity, here it is not possible to find any admissible speed. Such a portrait persists when $f(u)$ is modified in $[0, 0.5]$ so as to be of type $C$, giving nonexistence also in this case. }}
\label{figB8}
\end{figure}

\subsection{Reaction terms of type C}

Similarly as in the above discussion, also for $f \in \mathcal{C}$ an obstacle to the existence of an admissible speed may be represented by the possibility that $y_{0, f}^+(D(u_0))$ is not
defined (namely, $\max\{\Vert h'\Vert_{L^\infty}, \Vert f^- \Vert_{L^\infty}\}$ is large), so that $c$ has to be chosen sufficiently negative and blow-up from the right is then possible. If this is not the case, the validity of \eqref{quipos}, roughly speaking, will be related with the sign of the admissible speed.  
Intuitively, we may thus expect an ``informal''
picture similar to the following, which takes into account the interplay between the growths of $h'$ 
and $f^-$: 
there exist two constants $M_1, M_2$ with $0 < M_2 < M_1$ such that  
\begin{itemize}
 \item[$\ast$] \,$\max\{\Vert h'\Vert_{L^\infty}, \Vert f^- \Vert_{L^\infty}\} < M_2 \Longrightarrow $ there exists $c^*$ positive;
\vspace{0.1 cm}
 \item[$\ast$] \,$M_2 < \max\{\Vert h'\Vert_{L^\infty}, \Vert f^- \Vert_{L^\infty}\} < M_1 \Longrightarrow $ there exists $c^*$ negative;
\vspace{0.1 cm}
 \item[$\ast$] \,$M_1 \leq \max\{\Vert h'\Vert_{L^\infty}, \Vert f^- \Vert_{L^\infty}\} \Longrightarrow $ there may not exist an admissible
speed $c^*$. 
\end{itemize}
As already mentioned, one should also control the backward solution $y_{c, f}^-$, which may blow up for $w > D(u_0)$.
Since a rigorous proof and an estimate of $M_1$ and $M_2$ in the general case do not appear easy from a theoretical point of view, we just provide some numerical simulations complementing the cases previously taken into account (see Figures \ref{figC3}-\ref{figC4} below). 

\begin{figure}[!h]
\centering
\includegraphics[width=11cm,height=6cm]{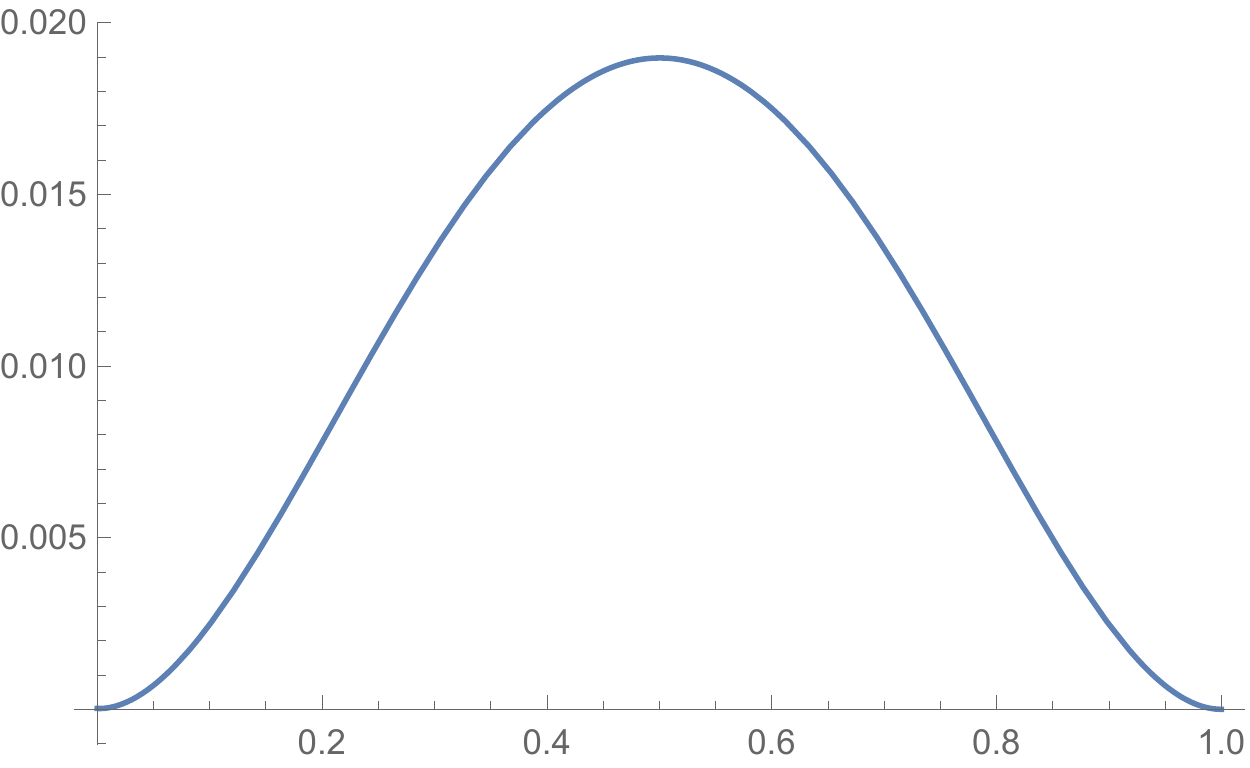}
\caption{\small{We here depict the backward solution $y_{c, f}^-$ to the differential equation in $(\star)$ for $h(u)=0.5 u^2$ and 
$f(u)=2u(1-u)(u-0.4)$, so that conditions \eqref{stimah} and \eqref{necessf} are all satisfied. Nevertheless, the interplay between the
growths of $h'$ and $f$ already gives rise to a negative admissible speed: numerically we find $c^* \approx -0.2462$.}}
\label{figC3}
\end{figure}

\begin{figure}[!h]
\centering
\includegraphics[width=11cm,height=6cm]{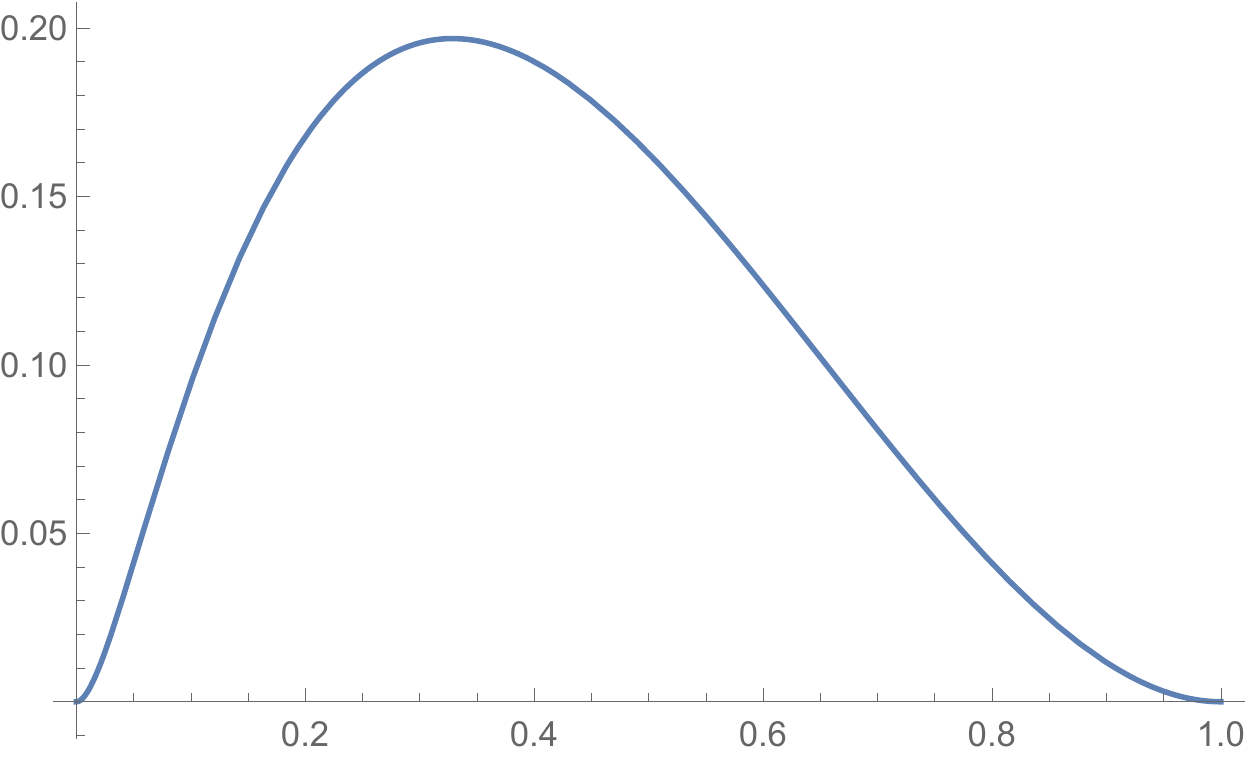}
\caption{\small{In this case, $f(u)=2u(1-u)(u-0.4)$, while $h(u)=\frac{1}{2} \log(t+0.05)$, so that condition
\eqref{stimah} is not satisfied. Nevertheless, the convection does not seem sufficiently strong yet in order for nonexistence of admissible speeds
to hold: the unique admissible speed is indeed found numerically to be negative ($c^* \approx -1.3629$).}}
\label{figC4}
\end{figure}


\section{Admissible speeds in the small viscosity limit -  future perspectives}\label{sez6}

We finally turn our attention to the behavior of the admissible speeds when a small parameter brakes the diffusion, namely
considering the PDE
\begin{equation}\label{PDEepsilon}
\d_t u=\varepsilon^2 \d_x (P[\d_x(D(u))]) -\d_x h(u)+ f(u),
\end{equation}
for some $0<\e \ll 1$. 

First, we deal with the case $D(u) =
u$ and $h(u) \equiv 0$. 
Setting $u(t, x)=v(x+ct)$, we observe that, if
$u$ solves \eqref{PDEepsilon}, then $v$ solves
$$
\varepsilon^2 (P(v'))' - c v' + f(v) = 0,
$$
so that, dividing both sides by $\varepsilon^2$ and setting $b_\varepsilon=c/\varepsilon^2$, we have 
\begin{equation}\label{probv}
(P(v'))' - b_\varepsilon v' + g(v) = 0,
\end{equation}
where 
$$
g(s)=\frac{f(s)}{\varepsilon^2}.
$$
As already remarked, such a problem has a critical speed $b_\varepsilon^*$ such that 
\begin{itemize}
 \item[-] if $f \in \mathcal{A}$, then equation
\eqref{probv} has a
monotone $\{0, 1\}$--connection if and only if
$b \geq b^*_\varepsilon$;
 \item[-] if $f \in \mathcal{B}$, equation \eqref{probv} has a
monotone $\{0, 1\}$--connection if and only if $b = b^*_\varepsilon$.
\end{itemize}
We will not consider explicitly the case $f \in \mathcal{C}$, since it is more delicate in view of assumption \eqref{necessf}. 
If $f \in \mathcal{A}$, we know that
$b^*_\varepsilon=2\sqrt{g'(0)}=2\sqrt{f'(0)}/\varepsilon$; 
consequently, the critical speed for the original problem is given by
$$
c^*_\varepsilon=\varepsilon^2 b^*_\varepsilon=2\varepsilon\sqrt{f'(0)}=\varepsilon c^*, 
$$
where $c^*$ is the critical speed for $\varepsilon=1$. 
\newline
On the other hand, in order to estimate $c^*_\varepsilon$ if $f \in \mathcal{B}$, we
notice that the internal zeros of $f \in \mathcal{B}$ possibly represent equilibria to be asymptotically reached
by a heteroclinic solution. If the uniqueness for the backward Cauchy problems is fulfilled (as is the case here), we thus have
to rule out the speeds yielding other than $\{0, 1\}$--connections. In particular, concerning increasing connections between $u_0$ and $1$, thanks to
Lemma \ref{necessariaBF} it is immediate to deduce that it has necessarily to be $b^*_\varepsilon < 2\sqrt{g'(u_0)}$; indeed,  for values larger than this
threshold, increasing $\{u_0, 1\}$--connections appear. A completely symmetric argument can provide a bound from below, so that 
$$
-2\varepsilon\sqrt{f'(u_0)} < c^*_\varepsilon < 2\varepsilon{\sqrt{f'(u_0)}},
$$
to be compared, in the linear case, with the estimate in \cite{HadRot}.
This ensures that, also in these situations, the admissible speed has order at most equal to $\mathcal{O}(\varepsilon)$. 
\smallbreak
Let us now turn our attention to the general equation
\begin{equation*}
\d_t u=\varepsilon^2\d_x (P[\d_x(D(u))]) - h'(u) \d_x  u + f(u)
\end{equation*}
studied in the present paper, 
with $f$ and $h$ satisfying hypotheses $(\mathbf{F})$ and $(\mathbf{H}')$. 
For the sake of simplicity, we only focus on \emph{positive} admissible speeds; by using the same procedure as before and denoting by $c^*_\varepsilon$ the
critical speed for such equation, on one hand
Theorem
\ref{teorema} ensures that 
\begin{itemize}
 \item[-] if $f \in \mathcal{A}$, 
$$
2\varepsilon\sqrt{d(0) f'(0)} - h'(0) \leq c^*_\varepsilon \leq 2 \varepsilon\sqrt{\sup_{u \in [0, 1]} \frac{d(u)f(u)}{u}} - \min_{u \in [0, 1]} h'(u).
$$
\end{itemize}
In particular, if $f(u) \leq f'(0)u$ for every $u \in [0, 1]$ and $h(u)$ is convex, this means that 
$c^*_\varepsilon=2\e\sqrt{d(0) f'(0)}-h'(0)$.
On the other hand, estimate \eqref{stimaconfisher} states that 
 \begin{itemize}
 \item[-] if $f \in \mathcal{B}$, then $c^*_\varepsilon$ (if it exists) is subject to the following estimate: $$
-2\varepsilon\sqrt{d_0 f'(u_0)}- \max_{u \in [0,1]} h'(u) < c^*_\varepsilon < 2\varepsilon  \sqrt{d_1 f'(u_0)}-\min_{u \in [0,1]} h'(u).
$$
\end{itemize}
Thanks to the previous computations, we observe that if the convective term is convex and satisfies $h'(0)=0$, as for instance in the model case of a
Burgers flux, then the critical speed for reaction terms of type $\mathcal A$ is of order $\e$; similar considerations can be done when $f \in \mathcal B$, when a positive admissible speed exists in dependence on $\varepsilon$ (cf. Propositions \ref{possibileB}
and \ref{possibileC}).

\begin{figure}[!h]
\centering
\includegraphics[width=11cm,height=6cm]{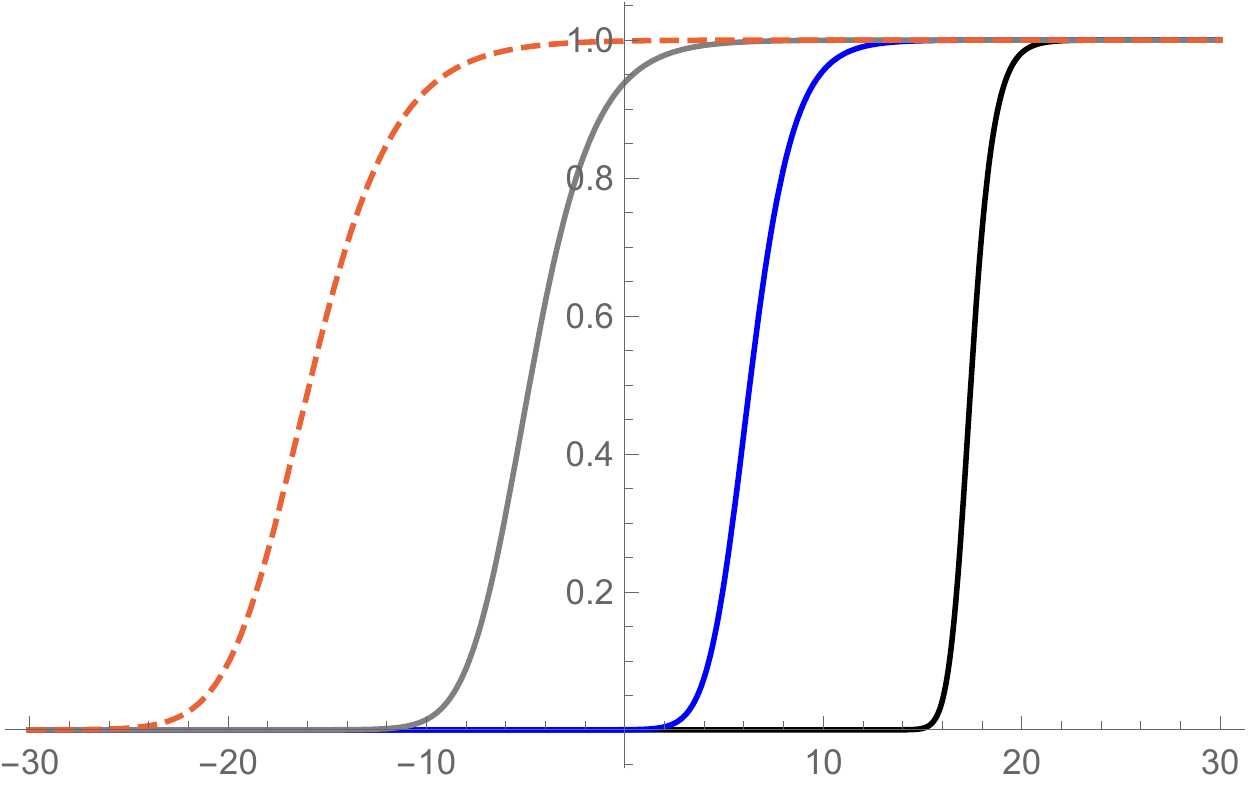}
\caption{\small{Referring to problem \ref{IIord}, we here depict the shape of the traveling waves for $f \in \mathcal{A}$ given by $f(u)=u(1-u)$, $h(u)=0.05u^2$ and $\varepsilon=0.25, 0.5, 0.75, 1$, setting $c=\varepsilon c^*$ (from right to left). Actually, in view of the fact that $\min_{u \in [0, 1]} h'(u)=h'(0)=0$, the observations above ensure that $c^*_\varepsilon=\varepsilon c^*$ and we depict numerically this situation (to this end, we have solved the backward Cauchy problem with initial conditions $y(30)=1-10^{-8}, y'(30)=0)$. We also observe that the shock profile becomes steeper as $\varepsilon \to 0$.}}
\label{fig61}
\end{figure}


\subsection{Issues of stability}

The previous computations suggest that, if we add a small parameter in front of the diffusive term, the speed rate of the heteroclinic
traveling waves 
for \eqref{PDEepsilon} under the additional assumption $h'(0)=0$ is proportional to $\e$, hence it is smaller as $\e$ goes to zero. This is meaningful in light of the
possible appearance of a phenomenon known as \emph{metastability}, whereby the time dependent solutions of an evolutive problem approach
their
stable steady state in an exponentially long time interval as the viscosity coefficient goes to zero.
Such behavior has been extensively studied in the linear diffusion case,  both for reaction-diffusion and advection-diffusion 
type equations, to be considered in a \emph{bounded} interval of the real line. To mention some results concerning this issue, we recall
here
\cite{deGrKara01, KimTzav01, MS} and \cite{ReynWard95} for viscous conservation laws and \cite{BerKamSiv01, Str15, SunWard99}
for Burgers type equations, as well as the fundamental contributions \cite{CarrPego89, FuscHale89} for phase transition problems described
by the Allen-Cahn equation. 
The number of references is very large and underlines how such a problem has arisen a big deal of interest
in the mathematical community over the past years. 
\smallbreak
It may thus be a natural and interesting issue to understand what happens, in this perspective,
when considering a saturating diffusion; one could hopefully expect that, once we reduce our study to a bounded interval $I
\subset \R$ and we complement the equation with appropriate  boundary conditions, a metastable behavior for the time dependent solution
appears.
\smallbreak
Of course, the first issue to be addressed in this direction is  the study of the stability properties of the steady states of the
initial-boundary value problem associated with \eqref{PDEepsilon}. Precisely, given $I=(-\ell,\ell)$, with $\ell > 0$, one has to consider the problem
\begin{equation*}
 \left\{\begin{aligned}
		&\partial_t u	 =\varepsilon^2\d_x (P[\d_x(D(u))]) + \alpha \partial_x h(u)+ \beta f(u),
		&\qquad &x\in I, t > 0,\\
 		&\gamma u(t, \pm \ell) + \delta\d_x u (t, \pm \ell)=0, &\qquad &t >0,\\
		&u(0, x)		 =u_0(x), &\qquad &x\in I,
 	 \end{aligned}\right.
\end{equation*}
where $\alpha, \beta, \gamma$ and $\delta \in \R$, while the functions $P,D,h$ and $f$ satisfy suitable assumptions. 
\smallbreak
Once the existence of a steady state $\bar U(x)$ for such a problem  is proved, there is a broad range of techniques to investigate its
stability/instability.
As usual, one could linearize the original system around the steady state, and subsequently  perform a spectral analysis in order to
study the exact location of the eigenvalues.
Another possible way could be to try to find a Lyapunov functional for the original equation; here the main difficulty is to deal with the sign of the solution at the boundary when computing the time derivative of the Lyapunov functional.
\smallbreak
In possible association with these issues, the stability properties of the traveling waves found in the previous sections may be investigated, as well. 
\smallbreak
To the best of our knowledge, for this problem such directions have not been rigorously studied yet in literature, and this will be
the object
of a future investigation.


\end{document}